\newtheorem{theorem}{Theorem}
\newtheorem{proposition}[theorem]{Proposition}
\theoremstyle{remark}
\newtheorem*{note}{Note}
\numberwithin{equation}{section}
\begin{document}

\title[Quadrature from Bernstein-Szeg\"o polynomials]
{Quadrature rules from finite orthogonality relations for Bernstein-Szeg\"o polynomials}

\author{J.F.  van Diejen}

\address{
Instituto de Matem\'atica y F\'{\i}sica, Universidad de Talca,
Casilla 747, Talca, Chile}

\email{diejen@inst-mat.utalca.cl}

\author{E. Emsiz}

\address{
Facultad de Matem\'aticas, Pontificia Universidad Cat\'olica de Chile,
Casilla 306, Correo 22, Santiago, Chile}
\email{eemsiz@mat.uc.cl}

\subjclass[2010]{Primary: 65D32 ;  Secondary 33C47, 33D45, 47B36}
\keywords{quadrature rules, Bernstein-Szeg\"o polynomials,
orthogonality relations, Jacobi matrices}

\thanks{This work was supported in part by the {\em Fondo Nacional de Desarrollo
Cient\'{\i}fico y Tecnol\'ogico (FONDECYT)} Grants  \# 1170179 and  \# 1181046.}

\date{April 2018}

\begin{abstract}
We glue two families of Bernstein-Szeg\"o polynomials to
construct the eigenbasis of an associated finite-dimensional Jacobi matrix. This gives rise to finite orthogonality relations for this composite eigenbasis of Bernstein-Szeg\"o polynomials. As an application, a number of Gauss-like quadrature rules are derived for the exact integration of rational functions with prescribed poles against the Chebyshev weight functions.
\end{abstract}

\maketitle

%\tableofcontents

%%%%%%%%%%%%%%%%%%%%%%%%%%%%%%%%%%%%%%%%%%%%
%%%%%%%%%%%%%%% SECTION %%%%%%%%%%%%%%%%%%%%%%%
%%%%%%%%%%%%%%% SECTION %%%%%%%%%%%%%%%%%%%%%%%
%%%%%%%%%%%%%%% SECTION %%%%%%%%%%%%%%%%%%%%%%%
%%%%%%%%%%%%%%%%%%%%%%%%%%%%%%%%%%%%%%%%%%%%

\section{Introduction}\label{sec1}
From the three-term recurrence relation for the Chebyshev polynomials
of the second kind \cite[Chapter 18]{olv-loz-boi-cla:nist}, it is immediate that these diagonalize a semi-infinite Jacobi matrix with zeros on the diagonal and units on the sub-- and superdiagonals.  Upon modifying the orthogonality measure via division by a positive polynomial of degree $d$, the Chebyshev basis  passes over into a basis of orthonormal
Bernstein-Szeg\"o polynomials \cite[Section 2.6]{sze:orthogonal}.
These polynomials turn out to diagonalize a ($d$-parameter family)  of semi-infinite Jacobi matrices that are perturbed at the top left block
\cite{sze:orthogonal,dam-sim:jost,ger-ili:bernstein-szego}:
\begin{equation*}
\begin{bmatrix}
b_0 & a_1 &   &   &  &       \\
a_1 &\ddots& \ddots&   &   &    \\
% & \ddots & \ddots&\ddots&    &  &      \\
     &  \ddots  &  \ddots &  a_{l} &    &    \\
         && a_l &  b_l&  1&     \\
      & & &1&0&\ddots   \\
     &   & & &\ddots &\ddots   \\
\end{bmatrix} ,
\end{equation*}
where $l=\lfloor \frac{d}{2}\rfloor$ (and $b_l\equiv 0$ when $d$ is even).
In this note we glue two such families of Bernstein-Szeg\"o polynomials depending on $d$ and $\tilde{d}$ parameters, respectively, so as to diagonalize 
a corresponding $(d+\tilde{d})$-parameter family of $(m+1)$-dimensional Jacobi matrices of the form
\begin{equation*}
\begin{bmatrix}
b_0 & a_1 &   &   &  &  &   &   &  &  &      \\
a_1 & \ddots & \ddots &   &   & &   &   & &  &       \\

     &  \ddots  &  \ddots &  a_{l} &    &  & &  &   &   &   \\
         && a_l &  b_l&  1&    &  &  &   &   &  \\
      & & &1&0&\ddots &  &  & & &  \\
     &   & & &\ddots&\ddots &\ddots &  &  & &  \\
  &   &   & & &\ddots &0 &1 &  &  &   \\
  &   &   && & &1&b_{m-\tilde{l}}&a_{m+1-\tilde{l}} &  &    \\
   &   &   & & & & &a_{m+1-\tilde{l}} &\ddots & \ddots &    \\
    &   && & & &  &   & \ddots &\ddots & a_m  \\
     &   && & & & & & &a_m &b_m \\
\end{bmatrix} ,
\end{equation*}
where  $l=\lfloor \frac{d}{2}\rfloor$,  $\tilde{l}=\lfloor \frac{\tilde{d}}{2}\rfloor$, and with $m$ positive such that $m+1\geq\lceil  \frac{d}{2}\rceil+\lceil\frac{\tilde{d}}{2} \rceil $.
The symmetry of the the Jacobi matrix gives rise to a finite-dimensional system of discrete orthogonality relations for the pertinent composite eigenbasis built of Bernstein-Szeg\"o polynomials. By standard arguments (cf. e.g. \cite{sze:orthogonal,gau:survey,dav-rab:methods}),
these orthogonality relations imply in turn Gauss-like quadrature rules for the exact integration of rational functions with prescribed poles against the Chebyshev weight function.

When $\tilde{d}=d=2$, the present construction recovers orthogonality relations that lie at the basis of a four-parameter family of discrete Fourier transforms unifying all sixteen standard types of discrete (co)sine transforms DST-$k$ and DCT-$k$ ($k=1,\ldots ,8$) \cite{die-ems:discrete}, whereas for  $\tilde{d}=0$ with $d$ arbitrary one recuperates
discrete orthogonality relations for the Bernstein-Szeg\"o polynomials stemming from the Gauss quadrature rule
\cite{dar-gon-jim:quadrature,bul-cru-dec-gon:rational,die-ems:exact}.

The presentation splits up in four parts. First our main fundamental result is stated in Section \ref{sec2}. Specifically---after recalling some classical facts concerning the Bernstein-Szeg\"o polynomials---we introduce a finite grid consisting of nodes $\xi^{(m)}_0,\ldots ,\xi_m^{(m)}$ on the interval $[0,\pi]$ whose positions are governed by an elementary transcendental equation (cf. Eqs. \eqref{bethe:eq}, \eqref{ua}).
To each node $\xi^{(m)}_{\hat{l}}$ ($0\leq\hat{l}\leq m$), we can associate an $(m+1)$-dimensional vector
$\psi^{(m)}\bigl(  \xi^{(m)}_{\hat{l}} \bigl)= \bigl[ \psi_l^{(m)}\bigl(  \xi^{(m)}_{\hat{l}} \bigl) \bigr]_{0\leq l\leq m} $
with components $\psi_l^{(m)}\bigl(  \xi^{(m)}_{\hat{l}} \bigl)$ built of two families of Bernstein-Szeg\"o polynomials evaluated at the node (cf. Eq. \eqref{bs-basis}). Our main theorem states that the vectors 
%\begin{equation*}
$\psi^{(m)}\bigl(  \xi^{(m)}_{0} \bigl),\ldots ,\psi^{(m)}\bigl(  \xi^{(m)}_{m} \bigr)$
%\end{equation*}
thus constructed satisfy an explicit system of orthogonality relations (cf. Theorem \ref{orthogonality:thm}). 

Section \ref{sec3} is devoted to the proof of these orthogonality relations, which proceeds in two steps. First, the orthogonality is verified by inferring that
(after performing a diagonal gauge transformation) the vectors in question provide an eigenbasis of an $(m+1)$-dimensional Jacobi matrix of the form displayed above, with entries built from the recurrence coefficients of the two underlying families of Bernstein-Szeg\"o polynomials. As a by-product, this reveals that our nodes parametrize the eigenvalues of the Jacobi matrix in question.
Next, the quadratic norms of the vectors are calculated by evaluating the diagonal of the corresponding Christoffel-Darboux kernel at the nodes.

Section \ref{sec4} collects some further miscellaneous results of interest. 
Specifically,  we compute the characteristic polynomial of our Jacobi matrix,  provide estimates for the locations of the nodes $\xi^{(m)}_0,\ldots ,\xi_m^{(m)}$ (and thus for the eigenvalues of the Jacobi matrix),  and exhibit---as a special example of our construction---finite orthogonality relations for the Askey-Wilson polynomials at $q=0$.

As a principal application, we wrap up in Section \ref{sec5}
with a precise description of the quadrature rules stemming from our orthogonality relations (cf. Theorem \ref{quadrature:thm}). Indeed, by `column-row duality' the orthogonality established in this note implies discrete orthogonality relations  for a finite system of Bernstein-Szeg\"o polynomials supported on the nodes $\xi^{(m)}_0,\ldots ,\xi_m^{(m)}$.
By comparing these discrete orthogonality relations with the conventional continuous orthogonality relations for the Bernstein-Szeg\"o polynomials,
the pertinent quadrature formulas are read-off immediately. A key feature of the quadrature rules under consideration is that these allow for the exact integration
of rational functions with prescribed poles (outside the integration interval) originating from the denominator of the orthogonality measure for the Bernstein-Szeg\"o polynomials.

\begin{note}
In principle the Bernstein-Szeg\"o  polynomials associated with the Chebyshev polynomials of the first--, the third-- and the fourth kind
may be viewed as parameter degenerations of the ones above stemming from the Chebyshev polynomials of the second kind \cite{sze:orthogonal,gri:oscillatory}.
However, to avoid limit transitions we have adopted the standard practice of
formulating the statements below uniformly for all four kinds.
\end{note}
 
\section{Statement of the main result}\label{sec2}

\subsection{Preliminaries on Bernstein-Szeg\"o polynomials}
The Bernstein-Szeg\"o polynomials  constitute an orthogonal basis for the Hilbert space $L^2\bigl( (0,\pi), w(\xi) \text{d}\xi\bigr)$, where the weight function is of the form
\begin{subequations}
\begin{equation}\label{bs-wf}
w (\xi) := \frac{1}{2\pi |{c}(\xi)|^{2}}  \qquad (0<\xi <\pi),
\end{equation}
with
\begin{equation}\label{c-f}
{c}(\xi)   := {(1+\epsilon_+e^{-i\xi})^{-1} (1-\epsilon_-e^{-i\xi})^{-1}}\prod_{1\leq r \leq d} (1+\alpha_r e^{-i\xi}) 
\end{equation}
\end{subequations}
(and $i:=\sqrt{-1}$). Here (and below) it is always assumed that
\begin{equation*}
\epsilon_\pm\in\{0,1\}\quad \text{and}  \quad   0< |\alpha_r| <1\quad (r=1,\ldots ,d) ,
\end{equation*}
with possible complex parameters $\alpha_r$ occurring in complex conjugate pairs. 
More specifically, the Bernstein-Szeg\"o basis $p_l(\xi)$, $l=0,1,2,\ldots$
enjoys a unitriangular expansion on the Fourier-cosine monomials
$1,(e^{i\xi}+e^{-i\xi}), (e^{2i\xi}+e^{-2i\xi}),\ldots $ and is obtained from them via Gram-Schmidt orthogonalization with respect to the weight function $w(\xi)$. Rather than to work with a monic-- or an  orthonormal basis,  it will be more convenient for our purposes to employ 
the following normalized version of the Bernstein-Szeg\"o polynomials:
\begin{equation}\label{bsp}
\texttt{p}_l(\xi) :=  \Delta_l^{-1} p_l(\xi)\quad\text{with}\quad   \Delta_l  := \int_0^\pi  p^2_l(\xi)\,  w(\xi)\text{d}\xi .
\end{equation}

A crucial observation going back to Bernstein and Szeg\"o---cf. 
\cite[Section 2.6]{sze:orthogonal}---now states that 
for $\boxed{{\textstyle l\geq d_\epsilon:= \frac{1}{2}(d-\epsilon_+ - \epsilon_-)}}$ one has explicitly:
\begin{subequations}
\begin{equation}\label{bs-explicit}
\texttt{p}_l(\xi ) =  {c}(\xi)e^{il \xi} + {c}(-\xi)e^{-il\xi} 
\end{equation}
 and (consequently)
\begin{equation}\label{bs-weights}
\Delta_l  =  
 \begin{cases} {\bigl(1 + (-1)^{\epsilon_-} \prod_{1\leq r\leq d}\alpha_r\bigr)^{-1}}  &\text{if}\  l={ d_\epsilon} ,\\
 1 &\text{if}\ l > {d_\epsilon}.
 \end{cases} 
 \end{equation}
\end{subequations}

\subsection{Composite Bernstein-Szeg\"o basis}
Let us fix two families of Bernstein-Szeg\"o polynomials $\texttt{p}_l(\xi)$ and $\tilde{\texttt{p}}_l(\xi)$ associated with the parameters
$\epsilon_\pm$, $\alpha_r$ ($r=1,\ldots ,d$) and $\tilde{\epsilon}_\pm$, $\tilde{\alpha}_r$ ($r=1,\ldots ,\tilde{d}$), respectively (subject to the domain restrictions specified above).
For positive $m$ such that
\begin{equation}\label{m-condition}
\boxed{m> \lceil d_\epsilon \rceil + \lceil \tilde{d}_{\tilde{\epsilon}}\rceil } 
\end{equation}
and $\hat{l}\in \{ 0,\ldots ,m\}$, let $\xi_{\hat{l}}^{(m)}$  be defined as the unique real solution of the transcendental equation
\begin{subequations}
\begin{equation}\label{bethe:eq}
2\bigl(m-d_\epsilon-\tilde{d}_{\tilde{\epsilon}}\bigr)\xi + \sum_{1\leq r\leq d}  \int_0^\xi u_{\alpha_r}(x)\text{d}x +    
 \sum_{1\leq r\leq \tilde{d}}  \int_0^\xi u_{\tilde{\alpha}_r}(x)\text{d}x  =\pi (2\hat{l}+\epsilon_- +\tilde{\epsilon}_-) ,
\end{equation}
where for $x\in\mathbb{R}$:
\begin{align}\label{ua}
u_\alpha(x) :=&   \frac{1-\alpha^2}{1+2\alpha\cos (x)+\alpha^2}  \qquad   (  |\alpha |<1)  \\
=& 1+2\sum_{l>0}  (-\alpha )^l  \cos (l x) . \nonumber
\end{align}
\end{subequations}
Indeed---since $\int_0^\pi u_\alpha(x)\text{d}x=\pi$ and the LHS of Eq. \eqref{bethe:eq} 
constitutes a (smooth) strictly  increasing  function of $\xi$---it is clear from this transcendental equation (via the mean value theorem) that
\begin{equation}\label{nodes}
0\leq \xi_0^{(m)}< \xi_1^{(m)}<\cdots <\xi_m^{(m)}\leq \pi.
\end{equation}
Notice that the equality $\xi^{(m)}_0=0$ is reached iff $\epsilon_-=\tilde{\epsilon}_-=0$, and that the equality $\xi^{(m)}_m=\pi$ is reached iff $\epsilon_+=\tilde{\epsilon}_+=0$.
The nodes therefore never hit a pole of
 $c(\pm \xi)$ or $\tilde{c}(\pm \xi)$, even in the situations that
the extremal values $\xi^{(m)}_0=0$ or $\xi^{(m)}_m=\pi$ are reached.

We are now in the position to introduce the following \emph{composite Bernstein-Szeg\"o basis} $\psi^{(m)}_0,\ldots,\psi^{(m)}_m$
on the nodes \eqref{nodes}:
\begin{equation}\label{bs-basis}
\psi^{(m)}_l (\xi )  :=
\begin{cases}
e^{\frac{i}{2} m\xi}  \frac{\texttt{p}_l(\xi)}{c(-\xi)} &\text{if}\  0\leq l<  d_\epsilon  ,\\
e^{\frac{i}{2} m\xi}  \frac{\texttt{p}_l(\xi)}{c(-\xi)} =
e^{-\frac{i}{2} m\xi}  \frac{\tilde{\texttt{p}}_{m-l}(\xi)}{\tilde{c}(\xi)} &\text{if}\  d_\epsilon\leq l\leq m-  \tilde{d}_{\tilde{\epsilon}} , \\
e^{-\frac{i}{2} m\xi}  \frac{\tilde{\texttt{p}}_{m-l}(\xi)}{\tilde{c}(\xi)} &\text{if}\   m-  \tilde{d}_{\tilde{\epsilon}}<l\leq m ,
\end{cases}
\end{equation}
where $\xi\in \{  \xi^{(m)}_0,\ldots ,\xi^{(m)}_m\}$ and $l\in \{ 0,\ldots ,m\}$. To avoid possible confusion with customary notation for the higher order derivatives of functions, let us emphasize at this point that throughout the presentation the superscript $(m)$ merely reflects the dependence of our construction on the number of nodes.

\subsection{Finite orthogonality relations}
For $l,\hat{l}\in \{  0,\ldots,m\}$, we define the (positive) weights
\begin{subequations}
\begin{equation}\label{pweights:a}
 {\Delta}^{(m)}_{l}:=
 \begin{cases}
  \Delta_{l}&\text{if}\ 0\leq l\leq  d_\epsilon  ,\\
  1 &\text{if} \ d_\epsilon< l< m-  \tilde{d}_{\tilde{\epsilon}} , \\
  \tilde{\Delta}_{m-l}&\text{if}\  m-  \tilde{d}_{\tilde{\epsilon}}\leq l\leq m ,
 \end{cases}
\end{equation}
and the dual (positive) weights
\begin{align}\label{pweights:b}
 \hat{\Delta}^{(m)}_{\hat{l}} 
 & :={\textstyle \left( \frac{1}{2}\right) ^{(1-\epsilon_-)(1-\tilde{\epsilon}_-)\delta_{\hat{l}}+(1-\epsilon_+)(1-\tilde{\epsilon}_+)\delta_{m-\hat{l}}}}  \times\\
& 
 \Biggl(2\bigl(m-d_\epsilon-\tilde{d}_{\tilde{\epsilon}}\bigr) + 
\sum_{1\leq r\leq d} u_{\alpha_r}\bigl(\xi^{(m)}_{\hat{l}}\bigr) +    
 \sum_{1\leq r\leq \tilde{d}}  u_{\tilde{\alpha}_r}\bigl(\xi^{(m)}_{\hat{l}} \bigr) \Biggr)^{-1} .  \nonumber
\end{align}
\end{subequations}
Here $\delta_x:=1$ if $x=0$ and $\delta_x:=0$  otherwise.

The following two (equivalent) orthogonality relations satisfied by the composite Bernstein-Szeg\"o basis constitute our principal result.
\begin{theorem}[Orthogonality Relations]\label{orthogonality:thm}
For positive $m > \lceil d_\epsilon \rceil + \lceil \tilde{d}_{\tilde{\epsilon}}\rceil $, the composite Bernstein-Szeg\"o basis \eqref{bs-basis}
satisfies the  orthogonality relations 
\begin{subequations}
\begin{equation}\label{ort:a}
\sum_{0\leq \hat{l }\leq m}   \psi^{(m)}_l  \bigl(\xi^{(m)}_{\hat{l}}\bigr)\,  \overline{\psi^{(m)}_k  \bigl(\xi^{(m)}_{\hat{l}}\bigr) } \hat{\Delta}^{(m)}_{\hat{l}}= 
\begin{cases}
1/\Delta_l^{(m)}&\text{if}\ k=l\\
0 &\text{if}\ k\neq l
\end{cases}
\end{equation}
($0\leq l,k\leq m$), or equivalently (by column-row duality)
\begin{equation}\label{ort:b}
\sum_{0\leq l  \leq m}   \psi^{(m)}_l  \bigl(\xi^{(m)}_{\hat{l}}\bigr)\,  \overline{\psi^{(m)}_l  \bigl(\xi^{(m)}_{\hat{k}}\bigr) } {\Delta}^{(m)}_{l}= 
\begin{cases}
1/\hat{\Delta}_{\hat{l}}^{(m)}&\text{if}\ \hat{k}=\hat{l}\\
0 &\text{if}\ \hat{k}\neq \hat{l}
\end{cases}
\end{equation}
\end{subequations}
($0\leq \hat{l},\hat{k}\leq m$).
\end{theorem}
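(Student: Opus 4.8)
The plan is to establish the orthogonality relations by identifying the composite Bernstein-Szeg\"o basis (after a suitable diagonal gauge transformation) with an eigenbasis of a symmetric $(m+1)$-dimensional Jacobi matrix of the form displayed in the introduction, whose entries are the recurrence coefficients of the two underlying Bernstein-Szeg\"o families. Since the two formulations \eqref{ort:a} and \eqref{ort:b} are related by column-row duality (one is the statement that a certain square matrix has orthogonal rows with prescribed norms, the other that it has orthogonal columns with prescribed norms, and for a square matrix these are equivalent once the respective Gram matrices are shown to be diagonal), it suffices to prove one of them; I would prove \eqref{ort:a}.

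First I would record the three-term recurrence for the normalized polynomials $\texttt{p}_l$ and $\tilde{\texttt{p}}_l$, reading off the recurrence coefficients $a_l,b_l$ (and their tilded counterparts) from the Gram-Schmidt construction, using the explicit formulas \eqref{bs-explicit}-\eqref{bs-weights} to see that these coefficients stabilize to the free values ($a_l=1$, $b_l=0$) once $l$ exceeds the Bernstein-Szeg\"o thresholds $d_\epsilon$, $\tilde{d}_{\tilde\epsilon}$. The middle-range compatibility in \eqref{bs-basis} — the fact that $e^{\frac{i}{2}m\xi}\texttt{p}_l(\xi)/c(-\xi) = e^{-\frac{i}{2}m\xi}\tilde{\texttt{p}}_{m-l}(\xi)/\tilde{c}(\xi)$ for $d_\epsilon\le l\le m-\tilde{d}_{\tilde\epsilon}$ — follows from \eqref{bs-explicit} since both sides equal $c(\xi)e^{i(l-m/2)\xi}+c(-\xi)e^{-i(l-m/2)\xi}$ divided through appropriately and then matched against the tilded side through the relation $c(\xi)c(-\xi)^{-1}=\tilde{c}(\xi)^{-1}\tilde{c}(-\xi)$ valid precisely at the nodes $\xi^{(m)}_{\hat l}$ — this is exactly where the transcendental equation \eqref{bethe:eq} enters, since taking the argument of that identity yields the phase condition encoded in \eqref{bethe:eq} with \eqref{ua}. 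Then I would show that the vectors $\psi^{(m)}(\xi^{(m)}_{\hat l})$, after scaling componentwise by $(\Delta^{(m)}_l)^{1/2}$ (the gauge transformation), satisfy the eigenvalue equation $J v = 2\cos(\xi^{(m)}_{\hat l}) v$ for the Jacobi matrix $J$: in the interior rows this is the free recurrence $\psi_{l-1}+\psi_{l+1}=2\cos\xi\,\psi_l$, which holds identically in $\xi$ for $\psi_l$ of the form $C_\pm e^{\pm il\xi}$; at the two perturbed blocks it is the genuine three-term recurrence for $\texttt{p}_l$ (resp. $\tilde{\texttt{p}}_l$); and — crucially — at the single junction row in the middle the two recurrences must agree, which is again guaranteed by the node equation. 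Because $J$ is real symmetric, its eigenvectors for distinct eigenvalues are orthogonal, and here the $m+1$ eigenvalues $2\cos(\xi^{(m)}_0)>\cdots>2\cos(\xi^{(m)}_m)$ are distinct by \eqref{nodes}; this yields the off-diagonal vanishing in \eqref{ort:a} (against the weight $\hat{\Delta}^{(m)}_{\hat l}$, which arises as the reciprocal squared norm of the corresponding left/right eigenvector component normalization — equivalently the Christoffel number).

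The diagonal part of \eqref{ort:a} — the evaluation of the squared norms — I would obtain by the standard Christoffel-Darboux argument: the quantity $\sum_{0\le\hat l\le m}|\psi^{(m)}_l(\xi^{(m)}_{\hat l})|^2\hat{\Delta}^{(m)}_{\hat l}$ is, up to the gauge factor, the diagonal entry $K_m(\xi,\xi)$ of the Christoffel-Darboux kernel of the finite orthonormal system, and the Christoffel-Darboux formula expresses this as a derivative of a product of consecutive polynomials divided by the top recurrence coefficient; evaluating at the node and using the transcendental equation \eqref{bethe:eq} to compute $\frac{d}{d\xi}$ of the relevant phase (which is exactly the expression $2(m-d_\epsilon-\tilde{d}_{\tilde\epsilon})+\sum_r u_{\alpha_r}(\xi)+\sum_r u_{\tilde\alpha_r}(\xi)$ appearing inside the parentheses in \eqref{pweights:b}) produces the claimed value $1/\Delta^{(m)}_l$, with the boundary factors $(1/2)^{(\cdots)\delta_{\hat l}+(\cdots)\delta_{m-\hat l}}$ accounting for the half-weight at endpoints $\xi=0$ or $\xi=\pi$ when those are actually attained. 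The main obstacle, I expect, is not the soft linear-algebra skeleton but the bookkeeping at the junction row of the Jacobi matrix: one must verify that the off-diagonal entry shared between the ``$\texttt{p}$-block'' and the ``$\tilde{\texttt{p}}$-block'' is consistent (it should be the free value $1$, since the junction sits in the stabilized region of both families), and that the node equation \eqref{bethe:eq} is precisely the compatibility condition making the two halves of the eigenvector match there — getting the phase conventions, the $\epsilon_-+\tilde{\epsilon}_-$ shift, and the $e^{\pm\frac{i}{2}m\xi}$ prefactors all to line up correctly is the delicate computation. A secondary technical point is handling the boundary cases $d_\epsilon$ or $\tilde d_{\tilde\epsilon}$ equal to $0$ (or half-integers), where the perturbed block degenerates and the first/last component of $\psi^{(m)}$ must be treated with the appropriate normalization $\Delta_{d_\epsilon}$ from \eqref{bs-weights}.
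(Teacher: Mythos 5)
Your overall architecture is the paper's: legitimize the gluing in \eqref{bs-basis} at the nodes via the transcendental equation \eqref{bethe:eq}, realize the gauge-transformed composite vectors as an eigenbasis of a symmetric Jacobi matrix so that the distinct eigenvalues $2\cos\bigl(\xi^{(m)}_{\hat l}\bigr)$ give the off-diagonal orthogonality, compute the norms through the diagonal of the Christoffel--Darboux kernel, and pass to the dual relation via unitarity of the square matrix $\bigl[\sqrt{\Delta^{(m)}_l\hat\Delta^{(m)}_{\hat l}}\,\psi^{(m)}_l\bigl(\xi^{(m)}_{\hat l}\bigr)\bigr]$. This is exactly how the paper proceeds (Propositions \ref{diagonalization:prp}, \ref{jacobi:prp} and \ref{prop:dual-weights}).

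However, you have the two dual relations interchanged precisely at the two places where the distinction matters. Orthogonality of eigenvectors of the symmetric Jacobi matrix is orthogonality of the vectors $\bigl[(\Delta^{(m)}_l)^{1/2}\psi^{(m)}_l\bigl(\xi^{(m)}_{\hat l}\bigr)\bigr]_{0\le l\le m}$ labelled by the nodes, i.e.\ it yields the off-diagonal part of \eqref{ort:b} (sum over the component index $l$, weight $\Delta^{(m)}_l$), not of \eqref{ort:a}; likewise the Christoffel--Darboux kernel on the diagonal computes $\sum_{0\le l\le m}|\psi^{(m)}_l(\xi)|^2\Delta^{(m)}_l$ at a fixed node, i.e.\ the diagonal entry $1/\hat\Delta^{(m)}_{\hat l}$ of \eqref{ort:b}. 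The quantity you propose to evaluate by Christoffel--Darboux, namely $\sum_{0\le\hat l\le m}\bigl|\psi^{(m)}_l\bigl(\xi^{(m)}_{\hat l}\bigr)\bigr|^2\hat\Delta^{(m)}_{\hat l}$, is a sum over the nodes of the values of one fixed component weighted by the Christoffel numbers; it is not a Christoffel--Darboux kernel and cannot be computed by that formula---its value $1/\Delta^{(m)}_l$ only emerges after \eqref{ort:b} has been fully established and one invokes the column-row duality you stated at the outset (deriving it instead from the quadrature rule would be circular, since the quadrature is a consequence of the theorem). The repair is simply to prove \eqref{ort:b}, which is what every tool in your plan actually delivers, and then dualize---precisely the paper's route. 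A smaller slip: the matching identity at the nodes is not $c(\xi)/c(-\xi)=\tilde c(-\xi)/\tilde c(\xi)$ but $e^{2im\xi}=c(-\xi)\tilde c(-\xi)\bigl(c(\xi)\tilde c(\xi)\bigr)^{-1}$, cf.\ Eq.~\eqref{bae-cf}; the factor $e^{2im\xi}$ is exactly what produces the term $2\bigl(m-d_\epsilon-\tilde d_{\tilde\epsilon}\bigr)\xi$ in \eqref{bethe:eq}, so it cannot be dropped when you verify the gluing and the junction rows.
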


For $d=\tilde{d}=2$ and $\epsilon_\pm=\tilde{\epsilon}_\pm=1$, this orthogonality can be found in \cite[Section 2.3]{die-ems:discrete}, while the degenerate case $\tilde{d}=0$ and $\tilde{\epsilon}_\pm=1$ is immediate from the Gauss quadrature rule associated with the Bernstein-Szeg\"o polynomials \cite[Section 8]{die-ems:exact}.

\section{Proof of Theorem \ref{orthogonality:thm}}\label{sec3}

\subsection{Preparatives}
Before proving the main theorem, let us first corroborate that (the gluing in) the definition of $\psi_l^{(m)}(\xi)$ \eqref{bs-basis} is legitimate, i.e. that at $\xi=\xi^{(m)}_{\hat{l}}$,
$\hat{l}\in \{ 0,\ldots , m\}$:
\begin{subequations}
\begin{equation}\label{gluing}
e^{\frac{1}{2} im\xi}  \frac{\texttt{p}_l(\xi)}{c(-\xi)} =
e^{-\frac{1}{2} im\xi}  \frac{\tilde{\texttt{p}}_{m-l}(\xi)}{\tilde{c}(\xi)} \quad \text{for}\quad d_\epsilon\leq l\leq m-  \tilde{d}_{\tilde{\epsilon}} .
\end{equation}
Recalling Eq. \eqref{bs-explicit}, one readily infers that Eq. \eqref{gluing} is satisfied when
\begin{equation}\label{bae-cf}
e^{2im\xi}=\frac{c(-\xi) \tilde{c}(-\xi)}{c(\xi)\tilde{c}(\xi)} ,
\end{equation}
or more explicitly (by Eq. \eqref{c-f}):
\begin{equation}\label{bae}
e^{2im\xi}=(-1)^{\epsilon_- +\tilde{\epsilon}_-} e^{2i(d_\epsilon+\tilde{d}_{\tilde{\epsilon}})\xi} 
\prod_{1\leq r\leq d}  \frac{1+\alpha_re^{i\xi}}{e^{i\xi}+\alpha_r}
\prod_{1\leq r\leq \tilde{d}}  \frac{1+\tilde{\alpha}_re^{i\xi}}{e^{i\xi}+\tilde{\alpha}_r} .
\end{equation}
\end{subequations}
That Eq. \eqref{bae} holds at $\xi=\xi^{(m)}_{\hat{l}}$ is immediate from Eqs. \eqref{bethe:eq}, \eqref{ua}, upon multiplying this defining transcendental equation for  $\xi^{(m)}_{\hat{l}}$  by the imaginary unit and exponentiating both sides with the aid of the identity
\begin{equation}\label{identity}
\exp \left( - i\int_0^\xi u_\alpha(x)\text{d}x \right)=  \frac{1+\alpha e^{i\xi}}{e^{i\xi}+\alpha}\qquad (|\alpha |<1).
\end{equation}

Now turning to the proof of the theorem, we observe that both types of orthogonality relations formulated in Theorem \ref{orthogonality:thm}  amount to the claim that the $(m+1)$-dimensional matrix
\begin{equation*}
\left[ \sqrt{\Delta^{(m)}_l \hat{\Delta}^{(m)}_{\hat{l}}} \psi^{(m)}_l \bigl(\xi^{(m)}_{\hat{l}} \bigr) \right]_{0\leq l,\hat{l}\leq m}
\end{equation*}
 is unitary. It is therefore sufficient to verify either one of them, and here we choose to infer Eq. \eqref{ort:b}. Rewritten explicitly in terms of Bernstein-Szeg\"o polynomials, this orthogonality relation states that for any $\ell \in \{ 0,\ldots ,m\} $ such that  $\lceil  \tilde d_{\tilde \epsilon}\rceil < \ell\leq m- \lceil d_\epsilon\rceil $ (which exists because
 $m >  \lceil d_\epsilon \rceil +\lceil \tilde d_{\tilde \epsilon }\rceil $):
 \begin{align}\label{split}
& \frac{1}{c\bigl(-\xi^{(m)}_{\hat{l}}\bigr)c\bigl(\xi^{(m)}_{\hat{k}}\bigr)}\sum_{l=0}^{m-\ell}  \texttt{p}_l  \bigl(\xi^{(m)}_{\hat{l}}\bigr) \texttt{p}_l \bigl(\xi^{(m)}_{\hat{k}}\bigr) \Delta_l  \\
 &+ \frac{1}{\tilde c\bigl(\xi^{(m)}_{\hat{l}}\bigr) \tilde c\bigl(-\xi^{(m)}_{\hat{k}}\bigr)}\sum_{l=0}^{\ell-1}\tilde{\texttt{p}}_l \bigl(\xi^{(m)}_{\hat{l}}\bigr) \tilde{\texttt{p}}_l \bigl(\xi^{(m)}_{\hat{k}}\bigr)  \tilde{\Delta}_l 
=\begin{cases}
1/\hat{\Delta}_{\hat{l}}^{(m)}&\text{if}\ \hat{k}=\hat{l}\\
0 &\text{if}\ \hat{k}\neq \hat{l}
\end{cases}  \nonumber
\end{align}
($0\leq \hat{l},\hat{k}\leq m$).

\subsection{Orthogonality}
In the normalization \eqref{bsp}, the 
three-term recurrence relation for the Bernstein-Szeg\"o polynomials takes the form
\begin{subequations}
\begin{equation}\label{recurrence}
2\cos (\xi) \texttt{p}_l (\xi) =   \texttt{p}_{l-1} (\xi)+  b_l  \texttt{p}_l  (\xi) + a_{l+1}^2 \text{p}_{l+1}(\xi)\quad \text{with}\  a_{l+1}:= \left( \frac{\Delta_{l+1}}{\Delta_l}\right)^{1/2},
\end{equation}
for certain (real) coefficients $b_l$, $l=0,1,2,\ldots$ (and $\texttt{p}_{-1}(\xi):=0$). From the explicit formulas \eqref{bs-explicit}, \eqref{bs-weights} it is moreover seen that
\begin{equation}\label{stabilize}
a_{l+1}=1 \ \text{for}\ l >  d_\epsilon \quad  \text{and} \quad  b_l=0\ \text{for}\  l > \lceil d_\epsilon \rceil  .
\end{equation}
\end{subequations}
With the aid of the corresponding three-term recurrences for $ \texttt{p}_l  (\xi) $ and  $\tilde{ \texttt{p}}_l  (\xi) $, we now construct a real tridiagonal matrix
that is diagonalized by the composite Bernstein-Szeg\"o basis \eqref{bs-basis}:
\begin{subequations}
\begin{equation}\label{Lm:a}
L^{(m)}:=\left[  L^{(m)}_{l,k} \right]_{0\leq l,k\leq m}
\end{equation}
with
\begin{align}\label{Lm:b}
&L^{(m)}_{l,k} := \\
&\begin{cases}
 \delta_{l-k-1}+ b_l\delta_{l-k} +a_{l+1}^2 \delta_{l-k+1}, &\text{if}\  0\leq l \leq \lceil d_\epsilon \rceil  ,\\
 \delta_{l-k-1}+ \delta_{l-k+1}&\text{if}\   \lceil d_\epsilon\rceil <  l <  m- \lceil \tilde{d}_{\tilde{\epsilon}} \rceil ,\\
 \tilde{a}_{m-l+1}^2 \delta_{l-k-1}+ \tilde{b}_{m-l} \delta_{l-k} + \delta_{l-k+1}, &\text{if}\  m-\lceil \tilde{d}_{\tilde{\epsilon}} \rceil \leq l\leq m .
\end{cases}
\nonumber
\end{align}
\end{subequations}
Upon interpreting  functions $f:\{ 0,\ldots ,m\}  \to \mathbb{C}$ as column vectors $[ f_l ]_{0\leq l\leq m}$, the action of
$L^{(m)}$ \eqref{Lm:a}, \eqref{Lm:b} on $f$ becomes:
\begin{equation}\label{Lm-action}
 (L^{(m)} f)_l := 
 \begin{cases}
 f_{l-1} + b_l f_l+a_{l+1}^2 f_{l+1}, &\text{if}\  0\leq l  \leq \lceil d_\epsilon \rceil  ,\\
 f_{l-1}+f_{l+1} , &\text{if}\   \lceil d_\epsilon\rceil <  l < m - \lceil \tilde{d}_{\tilde{\epsilon}} \rceil ,\\
\tilde{a}_{m-l+1}^2  f_{l-1} + \tilde{b}_{m-l} f_l+ f_{l+1} &\text{if}\  m-\lceil \tilde{d}_{\tilde{\epsilon}}\rceil  \leq  l\leq m  
\end{cases}
\end{equation}
(where $f_{-1}=f_{m+1}:=0$).

\begin{proposition}[Diagonalization of $L^{(m)}$]\label{diagonalization:prp}
Viewed as a function of $l\in \{ 0,\ldots ,m\}$, the composite Bernstein-Szeg\"o
vector $\psi^{(m)}_l (\xi)$ \eqref{bs-basis} solves the eigenvalue equation
\begin{equation}\label{ev-eq}
L^{(m)}\psi^{(m)}(\xi) = 2\cos(\xi ) \psi^{(m)} (\xi) ,
\end{equation}
for any $\xi\in \{ \xi_0^{(m)},\ldots ,\xi_m^{(m)}\} $.
\end{proposition}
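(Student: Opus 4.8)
\textbf{Proof plan for Proposition \ref{diagonalization:prp}.}

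The plan is to verify the eigenvalue equation \eqref{ev-eq} entry by entry, following the three-block structure of $L^{(m)}$ in \eqref{Lm:b}, \eqref{Lm-action}, and substituting in each block the three-term recurrence \eqref{recurrence} for the appropriate one of the two underlying families of Bernstein-Szeg\"o polynomials. The first preliminary step is to invoke the gluing \eqref{gluing} --- valid at $\xi\in\{\xi^{(m)}_0,\ldots,\xi^{(m)}_m\}$ precisely because these nodes solve \eqref{bae} (equivalently \eqref{bethe:eq}), as shown in the Preparatives --- to record that $\psi^{(m)}_l(\xi)$ is well defined and that throughout the overlap $d_\epsilon\le l\le m-\tilde d_{\tilde\epsilon}$ either defining expression in \eqref{bs-basis} may be used at will. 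Thus $\psi^{(m)}_l(\xi)=e^{\frac{i}{2}m\xi}\texttt{p}_l(\xi)/c(-\xi)$ for every integer $l$ with $0\le l\le m-\lceil\tilde d_{\tilde\epsilon}\rceil$, and $\psi^{(m)}_l(\xi)=e^{-\frac{i}{2}m\xi}\tilde{\texttt{p}}_{m-l}(\xi)/\tilde c(\xi)$ for every integer $l$ with $\lceil d_\epsilon\rceil\le l\le m$, with the conventions $\texttt{p}_{-1}=\tilde{\texttt{p}}_{-1}:=0$ matching the convention $f_{-1}=f_{m+1}:=0$ in the action \eqref{Lm-action}.

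With this in hand I would treat the three ranges separately. For $0\le l\le\lceil d_\epsilon\rceil$, the hypothesis $m>\lceil d_\epsilon\rceil+\lceil\tilde d_{\tilde\epsilon}\rceil$ forces $l-1,l,l+1$ into $\{-1,0,\ldots,m-\lceil\tilde d_{\tilde\epsilon}\rceil\}$, so all three of $\psi^{(m)}_{l-1},\psi^{(m)}_l,\psi^{(m)}_{l+1}$ are represented through $\texttt{p}$; multiplying \eqref{recurrence} by $e^{\frac{i}{2}m\xi}/c(-\xi)$ then gives $2\cos(\xi)\psi^{(m)}_l=\psi^{(m)}_{l-1}+b_l\psi^{(m)}_l+a^2_{l+1}\psi^{(m)}_{l+1}$, i.e., $(L^{(m)}\psi^{(m)})_l$ in the first block. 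For $\lceil d_\epsilon\rceil<l<m-\lceil\tilde d_{\tilde\epsilon}\rceil$, the stabilization \eqref{stabilize} gives $b_l=0$ and $a_{l+1}=1$, the same three indices again lie in the $\texttt{p}$-range, and the identical manipulation yields $2\cos(\xi)\psi^{(m)}_l=\psi^{(m)}_{l-1}+\psi^{(m)}_{l+1}$. For $m-\lceil\tilde d_{\tilde\epsilon}\rceil\le l\le m$ I would set $j:=m-l$ and instead apply the tilded recurrence $2\cos(\xi)\tilde{\texttt{p}}_j=\tilde{\texttt{p}}_{j-1}+\tilde b_j\tilde{\texttt{p}}_j+\tilde a^2_{j+1}\tilde{\texttt{p}}_{j+1}$; rewriting $j\mp1=m-(l\pm1)$ and multiplying by $e^{-\frac{i}{2}m\xi}/\tilde c(\xi)$ --- using once more $m>\lceil d_\epsilon\rceil+\lceil\tilde d_{\tilde\epsilon}\rceil$ to keep $l-1,l,l+1$ in the tilded range $\{\lceil d_\epsilon\rceil,\ldots,m\}$, with $\tilde{\texttt{p}}_{-1}=0$ absorbing the boundary at $l=m$ --- produces $2\cos(\xi)\psi^{(m)}_l=\tilde a^2_{m-l+1}\psi^{(m)}_{l-1}+\tilde b_{m-l}\psi^{(m)}_l+\psi^{(m)}_{l+1}$, which is the third block of \eqref{Lm-action}.

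These three ranges partition $\{0,\ldots,m\}$ --- disjointness being again exactly the hypothesis $m>\lceil d_\epsilon\rceil+\lceil\tilde d_{\tilde\epsilon}\rceil$ --- so this would complete the proof of \eqref{ev-eq}. I expect the only real obstacle to be the bookkeeping at the two block boundaries, near $l=\lceil d_\epsilon\rceil$ and near $l=m-\lceil\tilde d_{\tilde\epsilon}\rceil$: one must confirm that at each junction the three consecutive indices $l-1,l,l+1$ still lie within a single range on which one fixed family represents $\psi^{(m)}$, and that the non-stabilized recurrence coefficients that survive at the junction ($b_l,a_{l+1}$ on the left, $\tilde b_{m-l},\tilde a_{m-l+1}$ on the right) coincide with the entries prescribed for $L^{(m)}$ there. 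Both facts follow from combining the lower bound on $m$ with \eqref{stabilize}; beyond checking them, the argument is a routine rewriting of the two three-term recurrences.
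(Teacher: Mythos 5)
Your proposal is correct and follows essentially the same route as the paper: the gluing at the nodes (established via \eqref{bae} in the Preparatives) lets you use whichever of the two representations in \eqref{bs-basis} is convenient, and then the eigenvalue equation is checked componentwise by multiplying the appropriate three-term recurrence \eqref{recurrence} (untilded or tilded) by the relevant prefactor, with the stabilization \eqref{stabilize} and the bound $m>\lceil d_\epsilon\rceil+\lceil\tilde d_{\tilde\epsilon}\rceil$ handling the middle block and the junctions. The only cosmetic difference is that you partition $\{0,\ldots,m\}$ into the three disjoint blocks of $L^{(m)}$, whereas the paper covers it by two overlapping ranges ($0\le l<m-\lceil\tilde d_{\tilde\epsilon}\rceil$ via $\texttt{p}$ and $\lceil d_\epsilon\rceil<l\le m$ via $\tilde{\texttt{p}}$), which is an immaterial distinction.
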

\begin{proof}
If $0\leq l < m-\lceil \tilde{d}_{\tilde{\epsilon}}\rceil $ then
\begin{equation*}
\bigl( L^{(m)} \psi^{(m)} (\xi)\bigr)_l \stackrel{ i)}{= }   \psi ^{(m)}_{l-1} (\xi)+ b_l  \psi^{(m)}_l  (\xi)  +a_{l+1}^2  \psi^{(m)} _{l-1} (\xi)
\stackrel{ii) }{=} 2\cos (\xi)   \psi^{(m)}_l  (\xi)  ,
\end{equation*}
and if $\lceil d_\epsilon\rceil <  l \leq  m$ then
\begin{equation*}
\bigl(L^{(m)} \psi^{(m)} (\xi)\bigr)_l \stackrel{i)}{=} 
\tilde{a}_{m-l+1}^2   \psi^{(m)} _{l-1} (\xi) + \tilde{b}_{m-l} \psi ^{(m)}_{l} (\xi)+  \psi^{(m)} _{l+1} (\xi)
\stackrel{ii)}{=} 2\cos (\xi)   \psi^{(m)}_l  (\xi)  .
\end{equation*}
Here we used $i)$ Eqs.~\eqref{Lm-action}, \eqref{stabilize} and $ ii) $ Eqs.~\eqref{bs-basis}, \eqref{recurrence}.
\end{proof}

Since  the (eigen)vectors
$\psi^{(m)}\bigl(\xi^{(m)}_{0}\bigr),\ldots ,\psi^{(m)}\bigl(\xi^{(m)}_{m}\bigr)$ are nonzero (because for any $0\leq \hat{l}\leq m$: $\psi^{(m)}_0\bigl(\xi^{(m)}_{\hat{l}}\bigr)\neq 0$, as  $\texttt{p}_0 (\xi)=\Delta_0^{-1}\neq 0$),
and the eigenvalues $2\cos \bigl(\xi^{(m)}_0\bigr),\ldots ,2\cos \bigl(\xi^{(m)}_m\bigr)$ are simple in view of Eq. \eqref{nodes},
it is evident that Proposition \ref{diagonalization:prp} indeed provides an eigenbasis for $L^{(m)}$ \eqref{Lm:a}, \eqref{Lm:b}.

Let $\Delta^{(m)}$ denote the positive $(m+1)$-dimensional diagonal matrix
\begin{equation}\label{Deltam}
\Delta^{(m)}:= \text{diag} \bigl( \Delta^{(m)}_0,  \Delta^{(m)}_1,\ldots , \Delta^{(m)}_m\bigr) .
 \end{equation}
 The orthogonality in  Eq. \eqref{ort:b} asserts that the vectors
 \begin{equation*}
\bigl( \Delta^{(m)}\bigr)^{1/2} \psi^{(m)}\bigl(\xi^{(m)}_{0}\bigr),\ldots ,\bigl( \Delta^{(m)}\bigr)^{1/2} \psi^{(m)}\bigl(\xi^{(m)}_{m}\bigr)
 \end{equation*}
 are orthogonal. In view of the diagonalization stemming from Proposition \ref{diagonalization:prp}, this orthogonality is clear after corroborating that the
 similarity transformation
 \begin{equation}\label{Jm}
 J^{(m)}:=\bigl( \Delta^{(m)}\bigr)^{1/2}   L^{(m)} \bigl( \Delta^{(m)}\bigr)^{-1/2}
 \end{equation}
  yields a Jacobi matrix.

\begin{proposition}[Jacobi Matrix $J^{(m)}$]\label{jacobi:prp}
The (real) tridiagonal matrix  $J^{(m)}$ \eqref{Jm} is symmetric, with elements $a_1^{(m)},\ldots ,a_m^{(m)}$ on the sub- and superdiagonals given by
\begin{subequations}
\begin{equation}
a_{l+1}^{(m)}:= \begin{cases}
a_{l+1} & \text{if}\  \  0\leq l < \lceil d_\epsilon\rceil  ,\\
\bigl( \Delta_l \tilde{\Delta}_{m-l-1}\bigr)^{-1/2} &\text{if}\  \ \lceil d_\epsilon\rceil \leq  l < m - \lceil \tilde{d}_{\tilde{\epsilon}} \rceil  , \\
\tilde{a}_{m-l}&\text{if}\   \  m-\lceil \tilde{d}_{\tilde{\epsilon}} \rceil \leq l < m  ,
\end{cases}
\end{equation}
and elements  $b_0^{(m)},\ldots ,b_m^{(m)}$ on the principal diagonal of the form
\begin{equation}
b_l^{(m)}:= \begin{cases}
b_l &\text{if}\  \  0\leq l \leq \lceil d_\epsilon\rceil  ,\\
0 &\text{if}\  \ \lceil d_\epsilon\rceil <  l < m - \lceil \tilde{d}_{\tilde{\epsilon}} \rceil  , \\
\tilde{b}_{m-l}&\text{if}\   \  m-\lceil \tilde{d}_{\tilde{\epsilon}} \rceil \leq l\leq m .
\end{cases}
\end{equation}
\end{subequations}
\end{proposition}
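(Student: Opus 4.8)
My plan is to read off $J^{(m)}=\bigl(\Delta^{(m)}\bigr)^{1/2}L^{(m)}\bigl(\Delta^{(m)}\bigr)^{-1/2}$ entrywise, using that conjugation by the positive diagonal matrix $\bigl(\Delta^{(m)}\bigr)^{1/2}$ gives $(J^{(m)})_{l,k}=\bigl(\Delta^{(m)}_l/\Delta^{(m)}_k\bigr)^{1/2}(L^{(m)})_{l,k}$. Such a conjugation keeps the matrix tridiagonal and leaves the main diagonal unchanged, so the diagonal of $J^{(m)}$ coincides with that of $L^{(m)}$, which by \eqref{Lm:b} equals $b_l$ on $0\le l\le\lceil d_\epsilon\rceil$, vanishes on $\lceil d_\epsilon\rceil<l<m-\lceil\tilde d_{\tilde\epsilon}\rceil$, and equals $\tilde b_{m-l}$ on $m-\lceil\tilde d_{\tilde\epsilon}\rceil\le l\le m$; this is precisely $b_l^{(m)}$ (the three pieces being mutually consistent at the block junctions because $b_l=0$ for $l>\lceil d_\epsilon\rceil$ and $\tilde b_l=0$ for $l>\lceil\tilde d_{\tilde\epsilon}\rceil$, by \eqref{stabilize} and its tilded analogue).

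For the sub/superdiagonal I would fix $0\le l<m$, put $e_l:=(L^{(m)})_{l,l+1}$ and $f_l:=(L^{(m)})_{l+1,l}$, and note that by \eqref{Lm:b} both are positive (each equals one of $a_{l+1}^2$, $\tilde a_{m-l}^2$, $1$). Since the diagonal prefactors cancel in the product, $(J^{(m)})_{l,l+1}(J^{(m)})_{l+1,l}=e_lf_l$; hence the two off-diagonal entries coincide---equivalently $J^{(m)}$ is symmetric---exactly when their ratio is $1$, i.e.\ when $\Delta^{(m)}_le_l=\Delta^{(m)}_{l+1}f_l$, in which case both equal $(e_lf_l)^{1/2}$. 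Thus it suffices to show, for every $0\le l<m$, that (a) $\Delta^{(m)}_le_l=\Delta^{(m)}_{l+1}f_l$---i.e.\ $\Delta^{(m)}$ is a symmetrizer for the glued recurrence matrix $L^{(m)}$---and (b) $(e_lf_l)^{1/2}=a_{l+1}^{(m)}$.

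Both of these I would check by splitting on the position of $l$ relative to the break indices $\lceil d_\epsilon\rceil$ and $m-\lceil\tilde d_{\tilde\epsilon}\rceil$, using: the defining relation $a_{l+1}^2=\Delta_{l+1}/\Delta_l$ (and $\tilde a_{l+1}^2=\tilde\Delta_{l+1}/\tilde\Delta_l$) from \eqref{recurrence}; the stabilization $a_{l+1}=\Delta_l=1$ for $l>d_\epsilon$ from \eqref{stabilize}, \eqref{bs-weights} (and its tilded twin), which in particular yields $\Delta^{(m)}_l=\Delta_l$ for all $l<m-\tilde d_{\tilde\epsilon}$ and $\Delta^{(m)}_l=\tilde\Delta_{m-l}$ for all $l>d_\epsilon$, the two descriptions overlapping, and both equal to $1$, on the middle stretch; and the explicit norm $\Delta_{d_\epsilon}=\bigl(1+(-1)^{\epsilon_-}\prod_{1\le r\le d}\alpha_r\bigr)^{-1}$ (likewise $\tilde\Delta_{\tilde d_{\tilde\epsilon}}$). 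In the interior of the first block $e_l=a_{l+1}^2$, $f_l=1$ and $\Delta^{(m)}_l=\Delta_l$, $\Delta^{(m)}_{l+1}=\Delta_{l+1}$, so (a) is literally the definition of $a_{l+1}$ and $(e_lf_l)^{1/2}=a_{l+1}=a_{l+1}^{(m)}$; the last block is the mirror image in the tilded family; in the crossover block one has $e_l=f_l=1$ and $\Delta^{(m)}_l=\Delta^{(m)}_{l+1}=1$ except at the one or two indices where the untilded and tilded blocks abut, where instead $e_l=a_{\lceil d_\epsilon\rceil+1}^2$ or $f_l=\tilde a_{\lceil\tilde d_{\tilde\epsilon}\rceil+1}^2$ and the cancellation $\Delta_{d_\epsilon}a_{d_\epsilon+1}^2=1$ (resp.\ $\tilde\Delta_{\tilde d_{\tilde\epsilon}}\tilde a_{\tilde d_{\tilde\epsilon}+1}^2=1$) does the work; in every sub-case $(e_lf_l)^{1/2}=\bigl(\Delta_l\tilde\Delta_{m-l-1}\bigr)^{-1/2}=a_{l+1}^{(m)}$.

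The only real difficulty I anticipate is bookkeeping: the break indices of $L^{(m)}$ are the ceilings $\lceil d_\epsilon\rceil$ and $m-\lceil\tilde d_{\tilde\epsilon}\rceil$, whereas those governing $\Delta^{(m)}$ are the possibly half-integer thresholds $d_\epsilon$ and $m-\tilde d_{\tilde\epsilon}$, so one must keep careful track of which regime a given integer $l$ falls in; and the hypothesis $m>\lceil d_\epsilon\rceil+\lceil\tilde d_{\tilde\epsilon}\rceil$ still permits degenerate configurations (an empty crossover block, or a crossover block consisting of a single index) in which the untilded and tilded blocks of $L^{(m)}$ become adjacent, which must be treated separately. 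Once the case list is organized along these lines, each individual case collapses to a one-line consequence of the three-term recurrence together with \eqref{bs-weights}.
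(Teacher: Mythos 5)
Your proposal is correct and follows essentially the same route as the paper: a direct entrywise computation of $J^{(m)}=\bigl(\Delta^{(m)}\bigr)^{1/2}L^{(m)}\bigl(\Delta^{(m)}\bigr)^{-1/2}$, using $a_{l+1}^2=\Delta_{l+1}/\Delta_l$, the stabilization \eqref{stabilize}, \eqref{bs-weights}, and the definition \eqref{pweights:a} of $\Delta^{(m)}_l$ to check the sub-, super- and main diagonals case by case. Your repackaging via the symmetrizer identity $\Delta^{(m)}_l L^{(m)}_{l,l+1}=\Delta^{(m)}_{l+1}L^{(m)}_{l+1,l}$ and the invariant product of the two off-diagonal entries is just a tidy way of organizing the same verification the paper carries out.
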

\begin{proof}
Starting from the formulas for $L^{(m)}$  and $\Delta^{(m)}$ in Eqs. \eqref{Lm:a}, \eqref{Lm:b} and Eq.  \eqref{Deltam}, respectively, one readily verifies
(upon recalling the definitions in Eqs.
\eqref{pweights:a} and
\eqref{recurrence}) that  
$J^{(m)}_{l,l+1}=\bigl( \Delta_l^{(m)}\bigr)^{1/2} L^{(m)}_{l,l+1} \bigl(\Delta^{(m)}_{l+1}\bigr)^{-1/2}=a^{(m)}_{l+1}$
and that
$J^{(m)}_{l+1,l}=\bigl( \Delta_{l+1}^{(m)}\bigr)^{1/2} L^{(m)}_{l+1,l}
\bigl(\Delta^{(m)}_{l}\bigr)^{-1/2} =a^{(m)}_{l+1}$   ($l=0,\ldots,m-1$), whereas on the diagonal
it is obvious that $J^{(m)}_{l,l}=L^{(m)}_{l,l}=b_l^{(m)}$ ($l=0,\ldots ,m$).
\end{proof}

This completes the proof of Eq. \eqref{ort:b}  when $\hat{k}\neq \hat{l}$.

\subsection{Quadratic norms}
It remains to infer the quadratic norm formula in  Eq. \eqref{ort:b} when $\hat{k}= \hat{l}$.
This amounts to the following summation formula.

\begin{proposition}[Summation Formula]
\label{prop:dual-weights}
For any $\xi\in\{ \xi_0^{(m)},\ldots ,\xi^{(m)}_m\}$, one has that
\begin{align}
 \sum_{0\leq l\leq m}  & |\psi_l^{(m)} (\xi )|^2 \Delta^{(m)}_l
= \\
&2^{\delta_\xi+\delta_{\pi-\xi}}\left(
2\bigl(m-d_\epsilon - \tilde{d}_{\tilde{\epsilon}}\bigr)+\sum_{1\leq r\leq d} u_{\alpha_r}(\xi )+\sum_{1\leq r\leq \tilde d} u_{\tilde{\alpha}_r}(\xi ) \right) . \nonumber
\end{align}

\end{proposition}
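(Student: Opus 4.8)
The plan is to evaluate the left-hand side as the value on the diagonal of the Christoffel--Darboux kernel for the two glued families, and then to collapse the resulting telescoping expression using the transcendental equation \eqref{bethe:eq} that defines the nodes. Concretely, fix $\xi\in\{\xi_0^{(m)},\dots,\xi_m^{(m)}\}$ and an intermediate index $\ell$ with $\lceil\tilde d_{\tilde\epsilon}\rceil<\ell\le m-\lceil d_\epsilon\rceil$. Using the gluing identity \eqref{gluing} termwise, the sum $\sum_{0\le l\le m}|\psi_l^{(m)}(\xi)|^2\Delta_l^{(m)}$ splits (exactly as in \eqref{split} with $\hat k=\hat l$) into $|c(-\xi)|^{-2}\sum_{l=0}^{m-\ell}\mathtt{p}_l(\xi)^2\Delta_l$ plus $|\tilde c(\xi)|^{-2}\sum_{l=0}^{\ell-1}\tilde{\mathtt{p}}_l(\xi)^2\tilde\Delta_l$. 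Each of these is (up to the factor $2\pi w(\xi)$, $2\pi\tilde w(\xi)$, respectively) a truncated Christoffel--Darboux sum $K_n(\xi,\xi)$ for the Bernstein--Szeg\H{o} polynomials.

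The first main step is therefore to produce a closed ``confluent Christoffel--Darboux'' formula for $\sum_{l=0}^{n}\mathtt{p}_l(\xi)^2\Delta_l$ when $n\ge d_\epsilon$, i.e.\ in the range where $\mathtt{p}_l(\xi)=c(\xi)e^{il\xi}+c(-\xi)e^{-il\xi}$ and $a_{l+1}=1$, $b_l=0$. Differentiating the diagonal CD identity, or more directly expanding $\mathtt{p}_l(\xi)^2=c(\xi)^2e^{2il\xi}+2c(\xi)c(-\xi)+c(-\xi)^2e^{-2il\xi}$ and summing the geometric series, one gets the diagonal kernel up to index $n$ as an explicit combination of $n$, of $c(\xi)c(-\xi)=|c(\xi)|^2$, and of boundary terms at $l=0$ and $l=d_\epsilon$ coming from the low-order polynomials and the non-unit weight $\Delta_{d_\epsilon}$. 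Multiplying by $|c(-\xi)|^{-2}$, the $2|c(\xi)|^2$-contribution produces the ``linear in $n$'' piece, while the geometric-series remainder produces a term proportional to $\bigl(c(\xi)/c(-\xi)\bigr)e^{2i(n+1)\xi}$ plus its conjugate. The same computation applied to the tilde family gives a term proportional to $\bigl(\tilde c(\xi)/\tilde c(-\xi)\bigr)e^{2i(n'+1)\xi}$. Adding the two halves with $n=m-\ell$, $n'=\ell-1$, the oscillatory contributions combine into a single expression proportional to $\dfrac{c(\xi)\tilde c(\xi)}{c(-\xi)\tilde c(-\xi)}e^{2im\xi}+\text{c.c.}$, which by \eqref{bae-cf}---valid precisely because $\xi$ is a node---equals $1+1=2$; so these terms contribute a \emph{finite}, $\ell$-independent amount that assembles, after using $u_\alpha(\xi)=1+2\sum_{l>0}(-\alpha)^l\cos(l\xi)$ and the logarithmic-derivative identity \eqref{identity}, exactly into $2(m-d_\epsilon-\tilde d_{\tilde\epsilon})+\sum_r u_{\alpha_r}(\xi)+\sum_r u_{\tilde\alpha_r}(\xi)$.

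The remaining bookkeeping is to check the boundary/prefactor refinements. The factor $2^{\delta_\xi+\delta_{\pi-\xi}}$ must come out correctly: when $\xi=0$ one has $\epsilon_-=\tilde\epsilon_-=0$ (so $c(-\xi)$, $\tilde c(\xi)$ have no pole factor there) and $e^{il\xi}=e^{-il\xi}$, which doubles the would-be geometric-series contributions and thereby the coefficient; the analogous statement at $\xi=\pi$ uses $\epsilon_+=\tilde\epsilon_+=0$ and $e^{il\pi}=e^{-il\pi}$. One verifies that in all other cases ($\xi$ strictly interior, or a boundary node with the corresponding $\epsilon$-flag equal to $1$) the doubling does not occur, matching $\delta_\xi+\delta_{\pi-\xi}=0$. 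I would also double-check independence of the splitting index $\ell$: different admissible $\ell$ shift a block of ``middle'' terms from one CD sum to the other, and the closed formulas must absorb this shift identically---this is a good internal consistency test. The main obstacle, and the place demanding care rather than cleverness, is the low-order boundary terms: for $l<d_\epsilon$ the explicit form \eqref{bs-explicit} fails, so one has to keep $\sum_{l=0}^{d_\epsilon-1}\mathtt{p}_l(\xi)^2\Delta_l+\mathtt{p}_{d_\epsilon}(\xi)^2\Delta_{d_\epsilon}$ as an opaque block and show that, together with the $l=d_\epsilon$ correction to the geometric series and the corresponding tilde block, it contributes nothing beyond what is already displayed---i.e.\ these genuinely finite-rank corrections cancel against the mismatch between $\sum_{l=0}^{n}$ with the ``stabilized'' summand and the true partial sum. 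This cancellation is exactly the statement that $\Delta_l=1$ for $l>d_\epsilon$ while the geometric evaluation of $\sum e^{\pm 2il\xi}$ over $0\le l\le n$ has its ``head'' subtracted; tracking it cleanly is the one step I expect to be delicate.
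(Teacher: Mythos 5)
Your overall skeleton agrees with the paper's: split the sum as in \eqref{split} (with $\hat k=\hat l$) into two truncated diagonal kernels for the two families, evaluate each in closed form, use \eqref{bae-cf} at the nodes to eliminate the splitting-index dependence, and treat $\xi=0,\pi$ separately to produce the factor $2^{\delta_\xi+\delta_{\pi-\xi}}$. But the step you yourself flag as ``delicate'' is precisely where your route breaks down, and it is exactly the point the paper's argument is designed to circumvent. Your termwise expansion $\texttt{p}_l^2(\xi)=c(\xi)^2e^{2il\xi}+2c(\xi)c(-\xi)+c(-\xi)^2e^{-2il\xi}$ and geometric summation are only available for $l\geq\lceil d_\epsilon\rceil$, where \eqref{bs-explicit}, \eqref{bs-weights} hold. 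The head block $\sum_{0\leq l\leq d_\epsilon}\texttt{p}_l^2(\xi)\Delta_l$ involves polynomials defined only through Gram--Schmidt, with no closed form and non-unit weights, and your proposal offers no mechanism to evaluate it: the asserted cancellation ``against the mismatch with the stabilized summand'' is not an identity you can verify without additional input. The paper's device for exactly this is the confluent Christoffel--Darboux formula, $\sum_{0\leq l\leq N}\texttt{p}_l^2(\xi)\Delta_l=\frac{\Delta_{N+1}}{2\sin\xi}\bigl(\texttt{p}_{N+1}\texttt{p}_N'-\texttt{p}_{N+1}'\texttt{p}_N\bigr)(\xi)$, which expresses the \emph{entire} partial sum, low-order terms included, through the two top polynomials alone; since $N\geq d_\epsilon$ these are in the stabilized regime and can be evaluated via \eqref{bs-explicit}. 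You mention ``differentiating the diagonal CD identity'' as an option but then abandon it; without it (or an equivalent input) the opaque block remains unevaluated and the proof is incomplete.

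There is also a substantive error in your accounting of where the answer comes from. If you carry out the geometric summation, the two top oscillatory terms (with denominators $1-e^{\pm2i\xi}$), after converting the untilded one via \eqref{bae-cf}, cancel identically --- they do not add up to a constant $2$ --- consistent with the paper's computation, where the $(2i\sin\xi)^{-1}$-terms from the two halves cancel. Consequently the sums $\sum_r u_{\alpha_r}(\xi)+\sum_r u_{\tilde\alpha_r}(\xi)$ cannot ``assemble'' from that boundary contribution: in the paper they arise from the logarithmic derivatives $\frac{1}{i}\bigl(\frac{c'(\xi)}{c(\xi)}+\frac{c'(-\xi)}{c(-\xi)}\bigr)$ (and their tilded analogues) produced by the Wronskian in the CD formula, each factor of \eqref{c-f} contributing $u_{\alpha_r}(\xi)-1$. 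In your framework these terms would have to be extracted from the lower geometric-series limits together with the very head block you have deferred, so the gap in the previous paragraph is not a side issue but the heart of the matter. Finally, at $\xi=0,\pi$ your denominators $1-e^{\pm2i\xi}$ degenerate, so the doubling factor needs a separate limiting argument in any case; the paper handles this by L'H\^opital's rule applied to the CD kernel.
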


\begin{proof}
For $\xi\in (0,\pi)$ the proof hinges on the Christoffel-Darboux kernel  for the Bernstein-Szeg\"o polynomials \cite{sze:orthogonal,gri:christoffel} evaluated on the diagonal:
\begin{equation*}
\sum_{0\leq l\leq N} \texttt{p}_l^2 (\xi) \Delta_l =  
 \frac{\Delta_{N+1}}{2\sin(\xi) }  \Bigl(  \texttt{p}_{N+1}(\xi)\texttt{p}_N^\prime(\xi) - \texttt{p}_{N+1}^\prime (\xi) \texttt{p}_N(\xi)  \Bigl) .
\end{equation*}
For $N\ge d_\epsilon$, the RHS can be computed explicitly by means of Eqs. \eqref{bs-explicit}, \eqref{bs-weights}:
\begin{align*}
\sum_{0\leq l\leq N} \texttt{p}_l^2 (\xi) \Delta_l =  &
 c(\xi)c(-\xi)
\left[
\frac{1}{i} \left( \frac{c'(\xi)}{c(\xi)}  +\frac{c'(-\xi)}{c(-\xi)} \right)
 + 2N+1 
 \right] 
 \\
&+  
(2i\sin \xi)^{-1}
\left(
 e^{i(2N+1)\xi}  c^2(\xi)
- e^{-i(2N+1)\xi}  c^2(-\xi)
 \right)  .
\end{align*}

By  using the definition of the composite Bernstein-Szeg\"o basis in Eq. \eqref{bs-basis} and splitting the sum as in Eq. \eqref{split},
we get (assuming $\xi\in\{ \xi_0^{(m)},\ldots ,\xi^{(m)}_m\}$):
\begin{equation*}
 \sum_{0\leq l\leq m}|\psi_l^{(m)} (\xi )|^2 \Delta^{(m)}_l
 =
 \frac{1}{c(\xi)c(-\xi)}\sum_{l=0}^{m-\ell}  \texttt{p}_l^2 (\xi) \Delta_l
+ \frac{1}{\tilde c(\xi) \tilde c(-\xi)}\sum_{l=0}^{\ell-1}\tilde{\texttt{p}}_l^2(\xi) \tilde{\Delta}_l ,
\end{equation*}
for
$\ell \in \{ 0,\ldots ,m\} $ such that  $\lceil  \tilde d_{\tilde \epsilon}\rceil < \ell \leq m- \lceil d_\epsilon\rceil $.
Summation of both parts with the aid of the explicit formula for the diagonal of the Christoffel-Darboux kernel yields:
\begin{align*}
 \frac{1}{\tilde c(\xi) \tilde c(-\xi)}\sum_{l=0}^{\ell-1}  \tilde{\texttt{p}}_l^2 (\xi)\tilde{\Delta}_l
=&
\left[
\frac{1}{i} \left( \frac{\tilde c'(\xi)}{\tilde{c}(\xi)}  +\frac{\tilde c'(-\xi)}{\tilde{c}(-\xi)} \right)
 + 2\ell-1 
 \right] 
 \\
& 
+
(2i\sin \xi)^{-1}
\left((
 e^{i(2\ell-1)\xi}  \frac{\tilde{c}(\xi)}{\tilde{c}(-\xi)}
- e^{-i(2\ell -1)\xi}  \frac{\tilde{c}(-\xi)}{\tilde{c}(\xi)}
 \right)
\end{align*}
and
\begin{align*}
\frac{1}{c(\xi)c(-\xi)}\sum_{l=0}^{m-\ell}  \texttt{p}_l^2 (\xi)\Delta_l
 =&
\left[
\frac{1}{i} \left(\frac{c'(\xi)}{c(\xi)}  +\frac{c'(-\xi)}{c(-\xi)} \right)
 + 2(m-\ell )+1 
 \right] 
 \\
& 
+(2i\sin \xi)^{-1}
\left(
 e^{i(2(m-\ell )+1)\xi}  \frac{c(\xi)}{c(-\xi)}
- e^{-i(2(m-\ell )+1)\xi}  \frac{c(-\xi)}{c(\xi)} \right)\\
=&
\left[
\frac{1}{i} \left(\frac{c'(\xi)}{c(\xi)}  +\frac{c'(-\xi)}{c(-\xi)} \right)
 + 2(m-\ell )+1 
 \right] 
 \\
& 
+ (2i\sin \xi)^{-1}
\left(
 e^{-i(2\ell-1)\xi}  \frac{\tilde{c}(-\xi)}{\tilde{c}(\xi)}
- e^{i(2\ell -1)\xi}  \frac{\tilde{c}(\xi)}{\tilde{c}(-\xi)} \right),
\end{align*}
where in the last step  Eq. \eqref{bae-cf} was used. Pasting everything together  finally entails
the desired result:
\begin{equation*}
 \sum_{0\leq l\leq m}|\psi_l^{(m)} (\xi )|^2 \Delta^{(m)}_l=2m+
\frac{1}{i} \left( \frac{c'(\xi)}{c(\xi)}  +\frac{c'(-\xi)}{c(-\xi)} 
+ \frac{\tilde c'(\xi)}{\tilde{c}(\xi)}  +\frac{\tilde c'(-\xi)}{\tilde{c}(-\xi)} \right) 
\end{equation*} 
(which simplifies to the stated norm formula upon inserting  the explicit expressions for $c(\xi)$ and $\tilde{c}(\xi)$ stemming from Eq. \eqref{c-f} into the
logarithmic derivatives). When $\xi=0$ or $\xi=\pi$, the summation formula follows similarly upon applying
L'H\^{o}pital's rule to the expression for the diagonal of  the Christoffel-Darboux kernel at the start (which entails an extra factor $2$).
\end{proof}

This completes the proof of Eq. \eqref{ort:b}  when $\hat{k}= \hat{l}$.

\section{Miscellanea}\label{sec4}

\subsection{Characteristic polynomial}
It is instructive to observe that the spectral values providing our nodes   $0\leq \xi_0^{(m)}<\cdots <\xi^{(m)}_m\leq \pi$ can be retrieved as the roots of the following polynomial of degree $m+1$
in $\cos (\xi)$:
\begin{subequations}
\begin{equation}\label{node-pol:a}
{Q}_{m+1}(\xi) := \sum_{0\leq k\leq 2( \tilde{d}_{\tilde{\epsilon}}+1)}  \text{e}_k(\tilde{\alpha}; 1-\tilde{\epsilon}_+;1-\tilde{\epsilon}_- )\, \texttt{q}_{m+1-k}(\xi)  ,
\end{equation}
where $ \texttt{q}_l(\xi):=c(\xi) e^{il\xi} + c(-\xi) e^{-il\xi} $ for $l\in\mathbb{Z}$ (so $\texttt{q}_l(\xi)=\texttt{p}_l(\xi)$ if $l\geq d_\epsilon$) and
\begin{equation*}
e_k(\tilde{\alpha};\tilde{\epsilon}_+; \tilde{\epsilon}_-) :=\text{e}_k(\tilde{\alpha})+(\tilde{\epsilon}_+ - \tilde{\epsilon}_-)\text{e}_{k-1}(\tilde{\alpha})- \tilde{\epsilon}_+\tilde{\epsilon}_-\text{e}_{k-2}(\tilde{\alpha}) ,
\end{equation*}
with
\begin{equation*}
\text{e}_k(\tilde{\alpha}) :=
\sum_{1\leq r_1<\cdots <r_k\leq\tilde{d}} \tilde{\alpha}_{r_1} \cdots  \tilde{\alpha}_{r_k} 
\end{equation*}
subject to the conventions that $\text{e}_0(\tilde{\alpha}):=1$ and $\text{e}_{k}(\tilde{\alpha}):=0$ if $k\not\in \{ 0,1,\ldots ,\tilde{d}\}$.

More precisely, one has that
\begin{equation}\label{node-pol:b}
Q_{m+1}(\xi) = 2^{m+1} \left( \cos(\xi)-\cos\bigl(  \xi^{(m)}_0  \bigr) \right)  \cdots  \left( \cos(\xi)-\cos\bigl(  \xi^{(m)}_m  \bigr) \right) .
\end{equation}
\end{subequations}
Indeed, by means of the explicit formula for $\texttt{q}_l(\xi)$ it is readily seen that the equation $Q_{m+1}(\xi)=0$ boils down to Eqs. \eqref{bae-cf} and \eqref{bae}. To this end $Q_{m+1}(\xi)$ \eqref{node-pol:a} is first split in two parts, with common factors $c(\xi)e^{i(m+1)\xi}$ and $c(-\xi)e^{-i(m+1)\xi}$, respectively, and then  the terms of each factor are collected with the aid of
the generating function
\begin{equation*}
(1+\tilde{\epsilon}_+z)(1-\tilde{\epsilon}_-z)\prod_{1\leq r \leq \tilde{d}} (1+\tilde{\alpha}_r z)=
\sum_{l=0}^{ \tilde{d}+\tilde{\epsilon}_++\tilde{\epsilon}_-}  \text{e}_k (\tilde{\alpha};\tilde{\epsilon}_+; \tilde{\epsilon}_-) z^k
\end{equation*}
(at $z=e^{\mp i\xi}$).
To confirm that the degree of $Q_{m+1}(\xi)$ in $\cos (\xi )$ is $m+1$, one uses that
\begin{equation*}
\deg( \texttt{q}_l) =\begin{cases} l&\text{if}\  l\geq  d_\epsilon , \\
2d_\epsilon - l &\text{if}\  l< d_\epsilon,
\end{cases}
\end{equation*}
in combination with our condition on $m$ in Eq. \eqref{m-condition},
whereas to compute the coefficient of the corresponding leading term (stemming from $\texttt{q}_{m+1}(\xi)=\texttt{p}_{m+1}(\xi)$) one employs
Eqs. \eqref{bsp} and \eqref{bs-explicit}, \eqref{bs-weights} (thus verifying the factorization of $Q_{m+1}(\xi)$ in  Eq. \eqref{node-pol:b}).

If we compare Eq. \eqref{node-pol:b} with Eq. \eqref{ev-eq},  then it is clear that $Q_{m+1}(\xi)$ \eqref{node-pol:a} coincides with the
characteristic polynomials of the $(m+1)$-dimensional tridiagonal matrices $L^{(m)}$ and $J^{(m)}$ from Propositions \ref{diagonalization:prp} and \ref{jacobi:prp}, respectively:
\begin{equation}
\det  \left( 2\cos (\xi)I^{(m)}- L^{(m)}\right)=\det  \left( 2\cos (\xi)I^{(m)}- J^{(m)}\right)=Q_{m+1}(\xi)
\end{equation}
(where $I^{(m)}$ refers to the $(m+1)$-dimensional identity matrix).

\subsection{Estimates for the locations of the nodes}
With the aid of the mean value theorem
it is immediate from the transcendental equation for $\xi^{(m)}_{\hat{l}}$ in Eqs. \eqref{bethe:eq}, \eqref{ua},  that
the nodes/spectral values $\xi_0^{(m)},\ldots ,\xi_m^{(m)}$ \eqref{nodes} satisfy the following inequalities:
\begin{subequations}
\begin{equation}\label{bound:a}
\frac{\pi \bigl(\hat{l}+\frac{\epsilon_- +\tilde{\epsilon}_-}{2} \bigr)}{m -{d_\epsilon}- \tilde{d}_{\tilde{\epsilon}}+\kappa_-}   \leq \xi_{\hat{l}}^{(m)}  \leq 
\frac{\pi \bigl(\hat{l}+\frac{\epsilon_- +\tilde{\epsilon}_-}{2} \bigr)}{m -{d_\epsilon}-\tilde{d}_{\tilde{\epsilon}}+\kappa_+} 
\end{equation}
(for $0\leq \hat{l}\leq m$), and
\begin{equation}\label{bound:b}
\frac{\pi (\hat{k}-\hat{l})}{m -{d_\epsilon}- \tilde{d}_{\tilde{\epsilon}}+\kappa_-}   \leq \xi_{\hat{k}}^{(m)} - \xi_{\hat{l}}^{(m)} \leq 
\frac{\pi (\hat{k}-\hat{l})}{m -{d_\epsilon}- \tilde{d}_{\tilde{\epsilon}}+\kappa_+} 
\end{equation}
(for $0\leq \hat{l}<\hat{k}\leq m$), where
\begin{equation}\label{kappa}
\kappa_\pm := \frac{1}{2}\sum_{1\leq r\leq d}   \left(\frac{1- |\alpha_r|}{1 + |\alpha_r|}\right)^{\pm 1}   + 
  \frac{1}{2}\sum_{1\leq r\leq \tilde{d}}   \left(\frac{1- |\tilde{\alpha}_{r|}}{1 + |\tilde{\alpha}_{r|}}\right)^{\pm 1}  .
\end{equation}
\end{subequations}
Here one uses that for $\xi$ real
\begin{equation*}
 \text{Re} \bigl(u_\alpha (\xi)\bigr) =  \frac{1}{2}  \Bigl(    u_{|\alpha |} \bigl(\xi+\text{Arg}(\alpha)\bigr)+ u_{|\alpha |} \bigl(\xi-\text{Arg}(\alpha )\bigr) \Bigr) ,
\end{equation*}
and thus
\begin{equation*}
\frac{1-|\alpha |}{1 + |\alpha |} \leq \text{Re} \bigl(u_\alpha (\xi)\bigr) \leq    \frac{1+|\alpha |}{1 - |\alpha |}  \qquad (|\alpha |<1).
\end{equation*}
(To recover the second estimate, for the distances between the nodes, one first subtracts the 
$\hat{l}$th equation in Eq. \eqref{bethe:eq} from the $\hat{k}$th equation before invoking the mean value theorem.)

\begin{note}\label{numerics:rem}
When all Bernstein-Szeg\"o parameters $\alpha_r$ and $\tilde{\alpha}_r$ tend to $0$,  the above estimates become exact and we reduce to the elementary situation
corresponding to $d=\tilde{d}=0$:
\begin{equation}\label{exact-node}
\xi_{\hat{l}}^{(m)}= 
\frac
{\pi (2\hat{l}+\epsilon_- +\tilde{\epsilon}_-  )}
{2m+\epsilon_+ +\epsilon_-  + \tilde{\epsilon}_+ +\tilde{\epsilon}_-} \qquad (\text{for}\ d,\tilde{d}=0).
\end{equation}
More generally the positions of the nodes  $\xi_{\hat{l}}^{(m)}$ can be effectively  retrieved
numerically from Eqs. \eqref{bethe:eq}, \eqref{ua}, e.g. by means of a standard  fixed-point iteration scheme like Newton's method.
Notice in this connection that the numerical integration of $u_\alpha (x)$ in Eq. \eqref{bethe:eq} can be avoided at this point, since (cf. Eq. \eqref{identity})
\begin{equation*}
\int_0^\xi u_\alpha(x) \text{d}x=i \text{Log}
\biggl( \frac{1+ \alpha e^{i\xi}}{e^{i\xi} +\alpha}  \biggr) =  2\text{Arctan} \Biggl(    \frac{1-\alpha}{1+\alpha}\tan \left( \frac{\xi}{2} \right)   \Biggr)
\end{equation*}
(for $|\alpha |<1$ and  $-\pi< \xi<\pi$). Convenient initial values for starting up such
numerical calculations are provided by the locations of the elementary nodes  at vanishing parameter values from Eq. \eqref{exact-node}. Indeed, this initial configuration automatically
complies with all inequalities in Eqs. \eqref{bound:a}--\eqref{kappa}.
\end{note}

\subsection{Finite orthogonality for  Askey-Wilson polynomials at $q=0$}
If $d=\tilde{d}=4$ and $\epsilon_\pm=\tilde{\epsilon}_\pm=1$ (so $d_\epsilon=\tilde{d}_{\tilde{\epsilon}}=1$), then the Bernstein-Szeg\"o weight function  coincides with that of the
Askey-Wilson polynomials \cite{ask-wil:some,koe-swa:hypergeometric} at $q=0$:
\begin{equation}
w(\xi)= \frac{1}{2\pi |c(\xi)|^2}   , \quad c(\xi)  =  \frac{\prod_{r=1}^4 (1+\alpha_re^{-i\xi})}{1-e^{-2i\xi}}   \qquad (0<\xi <\pi).
\end{equation}
We then have explicitly
\begin{subequations}
\begin{equation}
\texttt{p}_l(\xi) = \begin{cases}
\Delta_0^{-1}&\text{if}\   l=0, \\
c(\xi) e^{il\xi}+c(-\xi)e^{-il\xi}&\text{if}\ l>0,
\end{cases}
\end{equation}
and
 \begin{equation}
\Delta_l=
\begin{cases}
\frac{1-\alpha_1\alpha_2\alpha_3\alpha_4}{\prod_{1\leq r<s\leq 4} (1-\alpha_r\alpha_s)} &\text{if}\ l=0,\\
  \frac{1}{1-\alpha_1\alpha_2\alpha_3\alpha_4}&\text{if}\ l=1,\\
  1 &\text{if}\  l>1 .
  \end{cases}
\end{equation}
\end{subequations}

For any  $m> 2$, the associated composite $q=0$ Askey-Wilson vectors
\begin{subequations}
\begin{align}
&\psi^{(m)}_l (\xi)=\\
&\begin{cases}
e^{\frac{i}{2} m\xi} \frac{\Delta_0^{-1}}{c(-\xi)} &\text{if}\ l=0,\\
e^{\frac{i}{2} m\xi}\bigl(\frac{c(\xi)}{c(-\xi)} e^{il\xi}  +e^{-il\xi }\bigr)=
e^{-\frac{i}{2} m\xi}\bigl(     e^{i(m-l)\xi}  + \frac{\tilde{c}(-\xi)}{\tilde{c}(\xi)} e^{-i(m-l)\xi }\bigr)
&\text{if}\   0< l<m,\\
e^{-\frac{i}{2} m\xi}\frac{\tilde{\Delta}_0^{-1}}{\tilde{c}(\xi)} &\text{if}\ l=m,
\end{cases} \nonumber
\end{align}
at $\xi=\xi^{(m)}_{\hat{l}}$ solving
\begin{equation}
2(m-2)\xi + \sum_{1\leq r\leq 4}  \int_0^\xi \bigl( u_{\alpha_r}(x) + u_{\tilde{\alpha}_r}(x) \bigr) \text{d}x  =2\pi (\hat{l}+1) 
\end{equation}
\end{subequations}
($0\leq \hat{l}\leq m$), diagonalize
the $(m+1)$-dimensional tridiagonal matrix 
\begin{equation}
L^{(m)}=
\begin{bmatrix}
b_0 & a_1^2 &   &   &  &  &          \\
1 &b_1 & a_2^2 &   &   & &           \\
& 1& 0 &  1 &    &  &      \\
         && \ddots & \ddots&  \ddots &    &     \\
 & & &1&0&1 &     \\
   & &  &   & \tilde{a}_2^2 &\tilde{b}_1 & 1       \\
    & & & & &\tilde{a}_1^2 &\tilde{b}_0      
\end{bmatrix} ,
\end{equation}
where
\begin{equation*}
a_1=\bigl(\Delta_1/\Delta_0\bigr)^{1/2} ,\qquad a_2=\Delta_1^{-1/2}, 
\end{equation*}
\begin{equation*}
 b_0=
(\Delta_1-1) \sum_{1\leq r\leq 4} \alpha_r^{-1}-
    \Delta_1 \sum_{1\leq r\leq 4} \alpha_r, 
    \end{equation*}
    \begin{equation*}
 b_1=     (\Delta_1-1)  \sum_{1\leq r\leq 4} (\alpha_r-\alpha_r^{-1})
 \end{equation*}
(and with analog formulas for $\tilde{a}_{l+1}$, $\tilde{b}_l$ and $\tilde{\Delta}_l$, $l=0,1$).

Given (any)  $1<    \ell \leq m-1$, the $q=0$ Askey-Wilson basis satisfies the composite finite orthogonality relations (cf. Eq. \eqref{split}):
 \begin{align}
& \frac{1}{c\bigl(-\xi^{(m)}_{\hat{l}}\bigr)c\bigl(\xi^{(m)}_{\hat{k}}\bigr)}\sum_{l=0}^{m-\ell}  \texttt{p}_l  \bigl(\xi^{(m)}_{\hat{l}}\bigr) \texttt{p}_l \bigl(\xi^{(m)}_{\hat{k}}\bigr) \Delta_l \, +\\
 & \frac{1}{\tilde c\bigl(\xi^{(m)}_{\hat{l}}\bigr) \tilde c\bigl(-\xi^{(m)}_{\hat{k}}\bigr)}\sum_{l=0}^{\ell-1}\tilde{\texttt{p}}_l \bigl(\xi^{(m)}_{\hat{l}}\bigr) \tilde{\texttt{p}}_l \bigl(\xi^{(m)}_{\hat{k}}\bigr)  \tilde{\Delta}_l \, =  \nonumber \\
&\begin{cases}
2(m-2)+ 
\sum_{1\leq r\leq 4} \left( u_{\alpha_r}\bigl(\xi^{(m)}_{\hat{l}}\bigr) +    
   u_{\tilde{\alpha}_r}\bigl(\xi^{(m)}_{\hat{l}} \bigr) \right)  &\text{if}\ \hat{k}=\hat{l}\\
0 &\text{if}\ \hat{k}\neq \hat{l}
\end{cases}  \nonumber
\end{align}
($0\leq \hat{l},\hat{k}\leq m$).

\section{Exact quadratures for rational functions with prescribed poles}\label{sec5}
From the orthogonality in Eq. \eqref{ort:a}, the following quadrature formulae for the exact integration of rational functions with prescribed poles
against the Chebyshev weight functions
are immediate via standard arguments (cf. e.g. \cite{sze:orthogonal,gau:survey,dav-rab:methods}).

\begin{theorem}[Exact Quadrature Rules for Rational Functions]\label{quadrature:thm}
Let $m$ be positive such that $m>\lceil d_\epsilon \rceil +\lceil \tilde{d}_{\tilde{\epsilon}} \rceil$ (and let us assume that the parameters meet the restrictions in Section \ref{sec2}).
Then the following (positive) quadrature rule holds true:
\begin{subequations}
\begin{equation}\label{q-rule:a}
\frac{1}{2\pi} \int_0^\pi  R (\xi)  \rho  (\xi) \text{d}\xi =
 \sum_{0\leq \hat{l}\leq m}   R\bigl( \xi _{\hat{l}}^{(m)}\bigr)  \rho \bigl(\xi _{\hat{l}}^{(m)}\bigr) \hat{\Delta}^{(m)}_{\hat{l}} ,
\end{equation}
where the nodes $0\leq \xi^{(m)}_0< \xi^{(m)}_2<\cdots <\xi^{(m)}_m\leq \pi$ are determined by Eqs. \eqref{bethe:eq}, \eqref{ua} and
the Christoffel weights $\hat{\Delta}_0,\ldots ,\hat{\Delta}_m$ are given by Eq. \eqref{pweights:b}.
In this identity $\rho (\cdot )$ refers to the Chebyshev weight function
\begin{equation}\label{q-rule:b}
\rho (\xi):=2^{\epsilon_++\epsilon_-}(1+\epsilon_+\cos(\xi))   (1-\epsilon_-\cos(\xi)) ,
\end{equation}
and $R(\cdot)$ denotes  a rational function of the form
\begin{equation}\label{q-rule:c}
R(\xi )  =   \frac{f\bigl(\cos (\xi)\bigr)}{\prod_{1\leq r\leq d} \bigl(1+2\alpha_r\cos (\xi )+\alpha_r^2\bigr)}   ,
\end{equation}
where $f\bigl(\cos(\xi)\bigr)$ represents an arbitrary polynomial   in $\cos (\xi)$ of degree at most 
\begin{equation}\label{q-rule:d}
D:= 2\bigl(m-\tilde{d}_{\tilde{\epsilon}}\bigr)-1.
\end{equation}
\end{subequations}
\end{theorem}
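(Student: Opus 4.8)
The plan is to rewrite the claimed quadrature as a discrete orthogonality relation for the \emph{first} of the two families of Bernstein--Szeg\"o polynomials and then to compare it with the classical continuous orthogonality relation for that family. First I would dispose of the weight functions: feeding the explicit form \eqref{c-f} of $c(\xi)$ into \eqref{bs-wf} and using $|1+\epsilon_+e^{-i\xi}|^2=2^{\epsilon_+}(1+\epsilon_+\cos\xi)$, $|1-\epsilon_-e^{-i\xi}|^2=2^{\epsilon_-}(1-\epsilon_-\cos\xi)$ together with $\prod_{1\le r\le d}|1+\alpha_re^{-i\xi}|^2=\prod_{1\le r\le d}(1+2\alpha_r\cos\xi+\alpha_r^2)$ (the complex parameters grouped into conjugate pairs), one gets $w(\xi)=\rho(\xi)/\bigl(2\pi\prod_{1\le r\le d}(1+2\alpha_r\cos\xi+\alpha_r^2)\bigr)$, hence $\frac{1}{2\pi}R(\xi)\rho(\xi)=f(\cos\xi)\,w(\xi)$. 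Theorem \ref{quadrature:thm} is therefore equivalent to the identity
\begin{equation}\label{quad-reduced}
\int_0^\pi f(\cos\xi)\,w(\xi)\,\text{d}\xi=2\pi\sum_{0\le\hat l\le m}f\bigl(\cos\xi^{(m)}_{\hat l}\bigr)\,w\bigl(\xi^{(m)}_{\hat l}\bigr)\,\hat\Delta^{(m)}_{\hat l}
\end{equation}
for every polynomial $f$ in $\cos\xi$ of degree at most $D=2(m-\tilde d_{\tilde\epsilon})-1$; positivity of the rule is then evident from $\hat\Delta^{(m)}_{\hat l}>0$ and $\rho\bigl(\xi^{(m)}_{\hat l}\bigr)>0$ (a zero of $\rho$ lies at an endpoint of $[0,\pi]$ only when the relevant $\epsilon_\pm=1$, in which case that endpoint is not a node).

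Next I would harvest the discrete orthogonality from \eqref{ort:a}. Put $M:=\min\bigl\{m,\lfloor m-\tilde d_{\tilde\epsilon}\rfloor\bigr\}$. For $0\le l\le M$ the first two clauses of \eqref{bs-basis} give $\psi^{(m)}_l\bigl(\xi^{(m)}_{\hat l}\bigr)=e^{\frac{i}{2}m\xi^{(m)}_{\hat l}}\texttt{p}_l\bigl(\xi^{(m)}_{\hat l}\bigr)/c\bigl(-\xi^{(m)}_{\hat l}\bigr)$, and since $\texttt{p}_l$ is a real polynomial in $\cos\xi$, $\overline{c(-\xi)}=c(\xi)$ for real $\xi$, and the nodes miss the poles of $c(\pm\xi)$, we get $\psi^{(m)}_l\bigl(\xi^{(m)}_{\hat l}\bigr)\,\overline{\psi^{(m)}_k\bigl(\xi^{(m)}_{\hat l}\bigr)}=2\pi\,\texttt{p}_l\bigl(\xi^{(m)}_{\hat l}\bigr)\texttt{p}_k\bigl(\xi^{(m)}_{\hat l}\bigr)\,w\bigl(\xi^{(m)}_{\hat l}\bigr)$ at every node for $0\le l,k\le M$. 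Feeding this into \eqref{ort:a} produces the finite orthogonality relations
\begin{equation*}
2\pi\sum_{0\le\hat l\le m}\texttt{p}_l\bigl(\xi^{(m)}_{\hat l}\bigr)\texttt{p}_k\bigl(\xi^{(m)}_{\hat l}\bigr)\,w\bigl(\xi^{(m)}_{\hat l}\bigr)\,\hat\Delta^{(m)}_{\hat l}=\delta_{l-k}/\Delta^{(m)}_l\qquad(0\le l,k\le M),
\end{equation*}
to be set against the continuous relations $\int_0^\pi\texttt{p}_l\texttt{p}_k\,w\,\text{d}\xi=\delta_{l-k}/\Delta_l$ coming from \eqref{bsp}. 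By \eqref{pweights:a} one has $\Delta^{(m)}_l=\Delta_l$ for $0\le l<m-\tilde d_{\tilde\epsilon}$, while at the only possibly exceptional index $l=M$ (which occurs just when $\tilde d_{\tilde\epsilon}$ is a non-negative integer) the two functionals still agree on $\texttt{p}_M\texttt{p}_k$ for all $k<M$, both sides vanishing by orthogonality. Hence discrete and continuous integration of $f$ agree on every product $\texttt{p}_l\texttt{p}_k$ with $0\le l,k\le M$, the sole possible exception being $\texttt{p}_M^2$.

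It then remains to count degrees. Since $\texttt{p}_l$ has degree exactly $l$ in $\cos\xi$, these products span all polynomials in $\cos\xi$ of degree $\le 2M-1$—and of degree $\le 2M$ if $\texttt{p}_M^2$ is admissible—so \eqref{quad-reduced} holds on the relevant space by linearity. A direct check of the relation between $M$ and $D$ shows that $\deg f\le D$ is already covered whenever $\tilde d>0$ or $(\tilde\epsilon_+,\tilde\epsilon_-)\ne(1,1)$. In the single remaining, degenerate case $\tilde d=0$, $\tilde\epsilon_+=\tilde\epsilon_-=1$ (so $\tilde d_{\tilde\epsilon}=-1$, $D=2m+1$), the reasoning is one degree short; there, however, $Q_{m+1}=\texttt{p}_{m+1}$ by \eqref{node-pol:a}, so the nodes are the classical Gauss nodes of the first family, and the standard device finishes: writing $f=q\,\texttt{p}_{m+1}+r$ with $\deg r\le m$, $\deg q\le m$, one has $\int_0^\pi q\,\texttt{p}_{m+1}\,w\,\text{d}\xi=0$ by orthogonality of the Bernstein--Szeg\"o polynomials while $\texttt{p}_{m+1}$ vanishes at every node, so \eqref{quad-reduced} for $f$ reduces to the already-established case $\deg r\le m$.

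The delicate point is this bookkeeping: ensuring that the discrete orthogonality distilled from \eqref{ort:a} involves enough Bernstein--Szeg\"o polynomials for their pairwise products to exhaust polynomials of degree $\le D$, with special care at the boundary index $M$ (where $\Delta^{(m)}_M$ and $\Delta_M$ may differ) and in the degenerate regime (where the characteristic polynomial $Q_{m+1}$ of Section \ref{sec4} must be invoked for the final degree).
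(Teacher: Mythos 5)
Your proposal is correct and follows essentially the same route as the paper's own proof: it rewrites the rule as the agreement of the discrete orthogonality extracted from \eqref{ort:a} (via the first two clauses of \eqref{bs-basis} and $|c(\xi)|^{-2}=2\pi w(\xi)$) with the continuous orthogonality \eqref{bsp}, with the same index bookkeeping at the boundary index (the paper's Eq.\ \eqref{do:b}) and the same degree count yielding $D=2(m-\tilde d_{\tilde\epsilon})-1$. The only difference is that for the degenerate case $\tilde d_{\tilde\epsilon}=-1$ you spell out the classical Gauss argument (using $Q_{m+1}=\texttt{p}_{m+1}$ and division with remainder), whereas the paper simply cites this case as the known Gauss quadrature.
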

\begin{proof}
In the normalization from Eq. \eqref{bsp}, the orthogonality relations for the Bernstein-Szeg\"o polynomials read:
\begin{subequations}
\begin{equation}\label{co:a}
\int_0^\pi    \texttt{p}_l(\xi) \texttt{p}_k(\xi) w (\xi ) \text{d} x = \begin{cases}
\Delta_l^{-1} &\text{if}\ k=l,\\
0&\text{if}\ k\neq l,
\end{cases}
\end{equation}
for $l,k=0,1,2,\ldots $, where
\begin{equation}\label{co:b}
w(\xi)=\frac{1}{2\pi | c(\xi)|^2}=    \frac{2^{\epsilon_++\epsilon_-}(1+\epsilon_+\cos(\xi))   (1-\epsilon_-\cos(\xi)) }{2\pi \prod_{1\leq r\leq d} \bigl(1+2\alpha_r\cos (\xi )+\alpha_r^2\bigr)} .
\end{equation}
\end{subequations}
On the other hand, the discrete orthogonality in Eq. \eqref{ort:a} guarantees that
\begin{subequations}
\begin{equation}\label{do:a}
\sum_{0\leq \hat{l}\leq m}   \texttt{p}_l\bigl( \xi^{(m)}_{\hat{l}}\bigr) \texttt{p}_k\bigl( \xi^{(m)}_{\hat{l}} \bigr) \left| c \bigl(  \xi^{(m)}_{\hat{l}} \bigr) \right|^{-2}  
\hat{\Delta}^{(m)}_{\hat{l}} =
 \begin{cases}
\Delta_l^{-1} &\text{if}\ k=l,\\
0&\text{if}\ k\neq l,
\end{cases}
\end{equation}
for $0\leq l,k\leq m$ such that
\begin{equation}\label{do:b}
\begin{cases}
 0\leq l,k\leq  m-\lceil \tilde{d}_{\tilde{\epsilon}}\rceil &\text{if}\ \tilde{d}_{\tilde{\epsilon}}  \not\in\mathbb{Z} , \\
  0\leq l\leq  m-\lceil \tilde{d}_{\tilde{\epsilon}}\rceil 
  \ \text{and}\  0\leq k<  m-\lceil \tilde{d}_{\tilde{\epsilon}}\rceil 
  &\text{if}\   \tilde{d}_{\tilde{\epsilon}}  \in\mathbb{Z} .
  \end{cases}
  \end{equation}
  \end{subequations}
Upon comparing the orthogonality relations in Eqs. \eqref{co:a}, \eqref{co:b} and Eqs. \eqref{do:a}, \eqref{do:b},  it is clear that the exact
quadrature rule in Eqs. \eqref{q-rule:a}--\eqref{q-rule:c}
is valid if we pick $f\bigl(\cos (\xi)\bigr)=  \texttt{p}_l (\xi) \texttt{p}_k( \xi )$ with $0\leq l,k\leq m$ as in Eq. \eqref{do:b}. Since the products in question span the space of polynomials in $\cos (\xi)$ of degree at most $D$ \eqref{q-rule:d} (unless $\tilde{d}_{\tilde{\epsilon}}=-1$), the asserted quadrature formula follows in this situation by linearity. If $\tilde{d}_{\tilde{\epsilon}}=-1$, then the present arguments only recover the stated quadrature rule for polynomials $f\bigl( \cos(\xi)\bigr)$ up to degree $2m$. However, this case corresponds to that of the classical Gauss quadrature and is thus known to extend up to degree $D=2m+1$ (cf. the note below).
\end{proof}

\begin{note}
i). If $\tilde{d}=0$ and $\tilde{\epsilon}_\pm =1$ (so $\tilde{d}_{\epsilon}=-1$ and $D=2m+1$),  then Theorem \ref{quadrature:thm} amounts to the conventional Gaus quadrature formula
associated with the Bernstein-Szeg\"o polynomials supported on the roots of $\texttt{p}_{m+1}(\xi)$ \cite{dar-gon-jim:quadrature,bul-cru-dec-gon:rational,die-ems:exact}.
The case  $\tilde{d}=1$ and $\tilde{\epsilon}_\pm =1$ (so $\tilde{d}_{\epsilon}=-\frac{1}{2}$ and $D=2m$) provides, on the other hand, a corresponding
Gauss-type quadrature
supported on the roots of the quasi-orthogonal polynomial $\texttt{p}_{m+1}(\xi)+\tilde{\alpha}_1 \texttt{p}_m (\xi)$, cf. Refs.  \cite{mic-riv:numerical,gau:survey,ask:positive}.

ii). If $\epsilon_-=\tilde{\epsilon}_-=0$ and $\epsilon_+ + \tilde{\epsilon}_+ \neq 0$ (so $\tilde{d}_{\tilde{\epsilon}}=\frac{\tilde{d}-\tilde{\epsilon}_+}{2}$ and $D=2m+\tilde{\epsilon}_+ -\tilde{d} -1$)   or  if $\epsilon_- + \tilde{\epsilon}_-\neq 0$ and $\epsilon_+=\tilde{\epsilon}_+ = 0$ (so $\tilde{d}_{\tilde{\epsilon}}=\frac{\tilde{d}-\tilde{\epsilon}_-}{2}$ and $D=2m+\tilde{\epsilon}_- -\tilde{d} -1$),
then we arrive at Radau-type quadrature rules with a left-boundary node $x_0^{(m)}=0$ or a right-boundary node $x_m^{(m)}=\pi$, respectively, 
cf. Refs.  \cite{gau:survey,dav-rab:methods,gau:generalized,jou-bec:gautschi,pet:generalized}. If $\epsilon_\pm=\tilde{\epsilon}_\pm=0$ (so $\tilde{d}_{\tilde{\epsilon}}=\frac{\tilde{d}}{2}$ and $D=2m-\tilde{d}-1$), then both  boundary nodes $x_0^{(m)}=0$ and $x_m^{(m)}=\pi$ are present, whence our quadrature  is of Lobatto type in this situation, cf. Refs.   \cite{gau:survey,dav-rab:methods,gau:generalized,jou-bec:gautschi,pet:generalized}.
 It is important to emphasize at this point that the particular kinds of generalized Radau-- and Lobatto-type quadratures appearing here  do not involve function evaluations of the derivatives at the end-nodes (in contrast to those studied in  Refs. \cite{gau:generalized,jou-bec:gautschi,pet:generalized}).

 iii). Clearly the degree of exactness $D$ \eqref{q-rule:d} becomes unacceptably low if $\tilde{d}$ is too large. By requiring our quadrature to be interpolatory, i.e. $D\geq m$
(thus giving rise to the exact integration of interpolation functions $R(\xi)$ \eqref{q-rule:c} with $\deg (f) \leq m$ and (arbitrarily) prescribed values on the nodes),
one has to pick $2\tilde{d}_{\tilde{\epsilon}}\leq m -1$. In this situation,  the expansion of the characteristic polynomial $Q_{m+1}(\xi)$ \eqref{node-pol:a}, \eqref{node-pol:b} in the
orthogonal Bernstein-Szeg\"o basis is of the form  \cite{mic-riv:numerical,gau:survey,ask:positive,peh:linear,xu:characterization,ezz-gue:fast}
\begin{subequations}
\begin{equation}
Q_{m+1}(\xi)=\texttt{p}_{m+1}(\xi) + \sum_{1\leq k\leq 2(\tilde{d}_{\tilde{\epsilon}}+1)} c_{2m+1-k} \,\texttt{p}_{m+1-k}(\xi) ,
\end{equation}
for certain coefficients $c_{2m+1-k}\in\mathbb{R}$ ($k=1,2,\ldots ,2(\tilde{d}_{\tilde{\epsilon}}+1)$).  In other words, $Q_{m+1}(\xi)$ is orthogonal to $\texttt{p}_{m+1-k}(\xi)$
if $2(\tilde{d}_{\tilde{\epsilon}}+1)<k\leq m+1$, which is readily seen upon integrating the product
$f\bigl (\cos (\xi))=  Q_{m+1}(\xi)\texttt{p}_{m+1-k}(\xi)$ against the weight function $w(\xi)$ \eqref{bs-wf}, \eqref{c-f} by means of the quadrature rule of Theorem \ref{quadrature:thm}.
 If in addition  $2\tilde{d}_{\tilde{\epsilon}}\leq m -1 -d_{\epsilon}$  (so $m+1-k\geq d_{\epsilon}$ for $1\leq k\leq 2(\tilde{d}_{\tilde{\epsilon}}+1)$), then a comparison with Eq. \eqref{node-pol:a} reveals that in this case one has explicitly:
 \begin{equation}
 c_{2m+1-k}=\text{e}_k(\tilde{\alpha}; 1-\tilde{\epsilon}_+ ; 1-\tilde{\epsilon}_- ) .
   \end{equation}
   \end{subequations}
\end{note}

%\section*{Acknowledgments}
%We thank a referee for suggestions that helped to improve the presentation.

\bibliographystyle{amsplain}

\end{document}